\gdef\pampmatrix{%
  \begingroup
  \let&=\amsamp
  \begin{pmatrix}%
}
\gdef\endpampmatrix{\end{pmatrix}\endgroup}
\newtheorem*{rep@theorem}{\rep@title}
\newcommand{\newreptheorem}[2]{%
\newenvironment{rep#1}[1]{%
 \def\rep@title{#2 \ref{##1}}%
 \begin{rep@theorem}}%
 {\end{rep@theorem}}}
\newcommand{\im}{\textnormal{im}}
\newcommand{\F}{\mathbb{F}}
\newcommand{\Z}{\mathbb{Z}}
\newcommand{\N}{\mathbb{N}}
\newcommand{\He}{\textnormal{He}(\Z/9)}
\newcommand{\co}{\colon\thinspace}
\newcommand{\cl}{\mathsf{cl}}
\theoremstyle{plain}
\newtheorem{theorem}{Theorem}[section]
\newtheorem*{theorem*}{Theorem}
\newtheorem{proposition}[theorem]{Proposition}
\newtheorem{lemma}[theorem]{Lemma}
\newtheorem{corollary}[theorem]{Corollary}
\theoremstyle{definition}
\newtheorem{definition}[theorem]{Definition}
\newtheorem{example}[theorem]{Example}
\newtheorem{question}[theorem]{Question}
\newtheorem*{question*}{Question}
\newcounter{exercises}
\theoremstyle{remark}
\newtheorem*{notation*}{Notation}
\newtheorem{remark}[theorem]{Remark}
\title{On families of nilpotent subgroups and associated coset posets}
\author{Simon Gritschacher and Bernardo Villarreal}
\address{Københavns Universitet, Institut for Matematiske Fag, Universitetsparken 5, 2100 K{\o}benhavn {\O}, Danmark}
\email{gritschacher@math.ku.dk}
\address{Universidad Nacional Aut\'{o}noma de M\'{e}xico, Instituto de Matem\'aticas, Ciudad Universitaria, Coyoac\'an 04510, CDMX, M\'{e}xico}
\email{villarreal@matem.unam.mx}
\date{\today}
\subjclass[2010]{Primary 57M07, 20F18, 55U10; Secondary 20F12, 20F45}
\keywords{Nilpotent group, 2-Engel group, colimit of groups, coset poset, higher generation, simplicial set, simplicial complex}
\begin{document}

\maketitle

\begin{abstract}
We study some properties of the coset poset associated with the family of subgroups of class $\leq 2$ of a nilpotent group of class $\leq 3$. We prove that under certain assumptions on the group the coset poset is simply-connected if and only if the group is $2$-Engel, and $2$-connected if and only if the group is nilpotent of class $2$ or less. We determine the homotopy type of the coset poset for the group of $4\times 4$ upper unitriangular matrices over $\F_p$, and for the Burnside groups of exponent $3$.
\end{abstract}

\section{Introduction}

It is not difficult to see that a group $G$ is isomorphic to the free product of its abelian subgroups amalgamated along their intersections if and only if $G$ is abelian \cite{Ok15}. This paper is a response to \cite[Conjecture 2.1]{Ok15} which asserts that more generally a group $G$ is isomorphic to the amalgamated product of its nilpotent subgroups of class $\leq q$ if and only if $G$ is itself nilpotent of class $\leq q$. We provide counterexamples which show that this fails to hold for every $q>1$.

The problem can be embedded in a more interesting topological context using coset posets and the work of Abels and Holz \cite{AH90}. Let $G$ be a group and $\mathcal{F}$ a family of subgroups of $G$. Define $\mathscr{C}(\mathcal{F},G)$ to be the geometric realization of the poset of cosets $\{gH\in G/H\mid g\in G,\, H\in \mathcal{F}\}$ partially ordered by inclusion. Abels and Holz show in \cite[Theorem 2.4]{AH90} that the fundamental group of $\mathscr{C}(\mathcal{F},G)$ is precisely the kernel of the canonical map $\textnormal{colim}_{H\in \mathcal{F}}H\to G$, where $\mathcal{F}$ is viewed as a poset under inclusion and the colimit is that of the inclusion functor $\mathcal{F}\hookrightarrow \textnormal{Grp}$.

Therefore, letting $\mathcal{N}_{q+1}$ denote the family of nilpotent subgroups of $G$ of class $\leq q$, an equivalent formulation of \cite[Conjecture 2.1]{Ok15} asserts that $\mathscr{C}(\mathcal{N}_{q+1},G)$ is simply-connected if and only if $G$ is nilpotent of class $\leq q$. For $q=2$ it is not difficult to see that $\mathscr{C}(\mathcal{N}_{3},G)$ is always simply-connected if $G$ is a $2$-Engel group (see Proposition \ref{prop:sufficient1connected}), i.e., a group in which the identical relation $[[x,y],y]=1$ holds. Hence, any $2$-Engel group of nilpotency class $3$ is a counterexample in the case $q=2$. Examples of such groups are the Burnside groups of exponent $3$, for which we determine the homotopy type of $\mathscr{C}(\mathcal{N}_3,G)$ in Theorem \ref{thm:htB3r}.

This raises the question of how nilpotency of $G$ may be characterized in terms of higher connectivity or other combinatorial properties of $\mathscr{C}(\mathcal{N}_{q+1},G)$, which will be the focus of the second part of the paper. We expect the general situation to be difficult, which is why in this paper we focus on nilpotent groups of class $\leq 3$. One of our main results (Theorem \ref{thm:main3groups}) has the following corollary:

\begin{theorem} \label{thm:main3groupsIntro}
Let $G$ be a nilpotent group of class $\leq 3$ and suppose that $[[G,G],G]$ has exponent dividing $3$. Then $\mathscr{C}(\mathcal{N}_{3},G)$ is
\begin{itemize}
\item[(i)] 1-connected if and only if $G$ is a 2-Engel group\smallskip
\item[(ii)] 2-connected if and only if $G$ is nilpotent of class $\leq 2$.
\end{itemize}
\end{theorem}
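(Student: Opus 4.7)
We split the biconditionals into four implications. The easy direction of (i), namely ``$G$ $2$-Engel $\Rightarrow \mathscr{C}(\mathcal{N}_3,G)$ is $1$-connected'', is Proposition \ref{prop:sufficient1connected}. The easy direction of (ii) is also immediate: if $G$ has class $\leq 2$ then $G \in \mathcal{N}_3$, so the coset $G = e\cdot G$ is a maximum of the poset and its realization is a cone, hence contractible. Neither of these directions uses the $3$-torsion hypothesis on $[[G,G],G]$.

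For the converses the plan is to invoke \cite[Theorem 2.4]{AH90} to identify $\pi_1(\mathscr{C}(\mathcal{N}_3,G))$ with $\ker\bigl(\varphi\co\colim_{H\in\mathcal{N}_3} H \to G\bigr)$, and to use the following consequence of the hypothesis: for every $a \in G$ and $c \in [G,G]$ the bracket $[a,c]$ lies in $[[G,G],G] \subseteq Z(G)$, so $\langle a,c\rangle$ has class $\leq 2$ and belongs to $\mathcal{N}_3$. In particular, for any $x,y \in G$, both $\langle x,[x,y]\rangle$ and $\langle y,[x,y]\rangle$ lie in $\mathcal{N}_3$, so the commutators $[x,[x,y]]$ and $[[x,y],y]$ are visible inside the colimit.

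For the converse of (i), suppose $G$ is not $2$-Engel and fix $x,y \in G$ with $[[x,y],y] \neq 1$. In the colimit there are two a priori distinct elements lying over $[x,y] \in G$: the lift $c$ from the cyclic subgroup $\langle[x,y]\rangle$, and the element $c' := xyx^{-1}y^{-1}$ built from separate lifts of $x$ and $y$. When $\langle x,y\rangle$ has class $3$, no class-$\leq 2$ subgroup of $G$ contains both $x$ and $y$, so no single $H \in \mathcal{N}_3$ forces $c = c'$. To show $c \neq c'$ in the colimit I would construct a central extension $\widetilde G \twoheadrightarrow G$ by $\mathbb{Z}/3$ which splits over every $H \in \mathcal{N}_3$; the universal property then yields $\widetilde\varphi\co\colim H \to \widetilde G$ whose two lifts of $[x,y]$ differ by the nontrivial element of $\mathbb{Z}/3$, witnessing a nonzero element of $\ker\varphi$. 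The main obstacle is building $\widetilde G$ with coherent splittings; I expect this to reduce to a Mayer--Vietoris/$\lim^1$ calculation on the system of class-$\leq 2$ subgroups, where the $3$-torsion hypothesis is precisely what makes $[[x,y],y]$ detectable by a $\mathbb{Z}/3$-valued cocycle that vanishes on each $H$.

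For the converse of (ii), $2$-connectivity implies $1$-connectivity, so by (i) we may assume $G$ is $2$-Engel; we must show that if additionally $G$ has class exactly $3$, then $\pi_2(\mathscr{C}(\mathcal{N}_3,G)) \neq 0$. Choosing $x,y,z \in G$ with $[[x,y],z] \neq 1$, I would assemble the three class-$\leq 2$ subgroups $\langle x,[y,z]\rangle$, $\langle y,[z,x]\rangle$, $\langle z,[x,y]\rangle \in \mathcal{N}_3$ along their pairwise intersections into a $2$-sphere whose boundary is null in the $1$-skeleton by virtue of the Hall--Witt identity $[[x,y],z]\,[[y,z],x]\,[[z,x],y] = 1$ in $G$. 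The $2$-Engel identity $[[a,b],c] = [[a,c],b]^{-1}$ combined with the $3$-torsion hypothesis yields $[[x,y],z]^3 = 1$ while keeping $[[x,y],z]$ nonzero, and this surviving residue represents the desired spherical class. The main obstacle is accessing $\pi_2$ directly, since Abels--Holz provides only $\pi_1$; I would handle this via the homotopy colimit description $\hocolim_{\mathcal{N}_3} EH \simeq EG$ and its associated spectral sequence, reducing the nonvanishing of $\pi_2$ to an explicit group-cohomological calculation controlled by the triple commutator data.
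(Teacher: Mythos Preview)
Your easy directions agree with the paper. For the converses your plan diverges from the paper's and, as written, has genuine gaps: the steps you yourself flag as ``main obstacles'' are precisely where the content of the proof lies, and you do not carry them out.

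The paper bypasses the Abels--Holz colimit description, central extensions, and spectral sequences entirely. It works in the simplicial model $E_\ast(3,G)$ and writes down explicit $\Gamma^3(G)$-valued cochains
\[
\omega_1(x,y)=[x,y,xy],\qquad \omega_2(x,y,z)=[x,y,z]
\]
on the Moore complex with $\Z/e$ coefficients, $e=\exp\Gamma^3(G)$. The substantive work is a commutator-calculus verification that $\omega_1$ is a cocycle under the hypothesis (Lemma~\ref{lem:cocyclew1}; this is exactly where the $3$-torsion assumption is used, to kill a residual term $[z^{-1}x,z^{-1}y,z^3]$) and that $\omega_2$ is a cocycle once $G$ is $2$-Engel (Lemma~\ref{lem:cocyclew2}). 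One then caps against the explicit cycles $c(x,y)=(x,y)+(y,1)+(1,x)$ and $c(x,y,z)=(x,y,z)-(1,y,z)+(1,x,z)-(1,x,y)$ and reads off $[x,y,xy]$ and $[x,y,z]$; nonvanishing of either commutator forces the corresponding homology group to be nonzero, and hence the space is not $1$- respectively $2$-connected.

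Your central-extension scheme for (i) is, in effect, a repackaging of the same obstruction: a coherent system of splittings $H\hookrightarrow\widetilde{G}$ over all $H\in\mathcal{N}_3$ is exactly the data of a homomorphism $\pi_1(E(3,G))\to\Z/3$, i.e.\ a class in $H^1(E(3,G);\Z/3)$, which is what $[\omega_1]$ provides. But you only posit $\widetilde{G}$ and say you ``expect'' the coherence to reduce to a $\varprojlim{}^1$ calculation; nothing is actually built, and that construction is the proof. For (ii) your $2$-sphere and spectral-sequence argument is even more schematic --- neither the sphere nor the relevant differential is produced. The paper's $\omega_2$ and the cycle $c(x,y,z)$ replace all of this with a one-line cap product once the cocycle condition is checked. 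If you want to rescue your route, the missing ingredient in both parts is precisely the explicit cocycle; and once you have it, the detours through $\widetilde{G}$ and the homotopy-colimit spectral sequence become unnecessary.
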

The theorem shows that nilpotency of $G$ is linked to the topology of $\mathscr{C}(\mathcal{N}_{q+1},G)$ in a more subtle manner than proposed in \cite{Ok15}, suggesting that higher connectivity is a more reasonable hypothesis in \cite[Conjecture 2.1]{Ok15}. Note that not only simple connectivity but also higher connectivity of coset posets has a purely algebraic interpretation as detailed in \cite[\S 4]{AH90}.

While Theorem \ref{thm:main3groupsIntro} may hold more generally for any \emph{3-group} of class $\leq 3$, we do find that simple connectivity of $\mathscr{C}(\mathcal{N}_{3},G)$ does not in general imply that $G$ is a $2$-Engel group:

\begin{reptheorem}{thm:e3u42}
Let $p$ be a prime and consider the group $U_4(\F_p)$ of $4\times 4$ upper unitriangular matrices with entries in $\F_p$. Then
\begin{enumerate}
\item For $p=2$ there is a homotopy equivalence
\[
\mathscr{C}(\mathcal{N}_{3},U_4(\F_2))\simeq \bigvee^3 S^2\, .
\]
\item If $p$ is odd, then there is a homotopy equivalence
\[\mathscr{C}(\mathcal{N}_{3},U_4(\F_p))\simeq \bigvee^{p^2(p-1)^3}S^1\vee \bigvee^{(p-1)^3}S^2\,.\]
\end{enumerate}
\end{reptheorem}
By the first part of the theorem $\mathscr{C}(\mathcal{N}_{3},U_4(\F_2))$ is simply-connected, yet $U_4(\F_2)$ is not a $2$-Engel group. (Note that a $p$-group for $p\neq 3$ is $2$-Engel if and only if it is nilpotent of class $\leq 2$.) In particular, $U_4(\F_2)$ is another counterexample to the conjecture in \cite{Ok15}.

It should be mentioned that the coset posets $\mathscr{C}(\mathcal{N}_{q+1},G)$ considered here have previously featured in the work of Adem, F. Cohen and Torres-Giese \cite{ACTg12, To11} albeit in a slightly different context. A question in \cite{ACTg12} essentially asks if the complexes $\mathscr{C}(\mathcal{N}_{q+1},G)$ are aspherical. In Corollary \ref{cor:kpi1} we answer this question in the negative for $q=2$.

We end the introduction with some questions that seem reasonable from the point of view of our paper. Many variations of these questions are possible by varying degrees of nilpotency and connectivity and involving higher Engel conditions.
\begin{question} \label{qu:2engel}
Let $G$ be a $3$-group of nilpotency class $\leq 3$ and suppose that $\mathscr{C}(\mathcal{N}_{3},G)$ is simply-connected. Is $G$ necessarily a $2$-Engel group?
\end{question}

The homotopy groups $\pi_i(\mathscr{C}(\mathcal{N}_{q+1},G))$ for $i\leq k$ are obstructions for $G$ to have all $(k+1)$-generator subgroups nilpotent of class $\leq q$ (see Proposition \ref{prop:sufficient1connected}). Thus one could ask more generally:

\begin{question}
Let $G$ be a nilpotent group of class $\leq q+1$ and suppose that $\mathscr{C}(\mathcal{N}_{q+1},G)$ is $q$-connected. Is then $G$ of class $\leq q$?
\end{question}

\begin{question}
Let $G$ be a finite group and suppose that $\mathscr{C}(\mathcal{N}_{q+1},G)$ is contractible for some $q>1$. Is $G$ nilpotent?
\end{question}

\noindent \emph{Notation.} Our convention for the commutator bracket is $[x,y]=x^{-1}y^{-1}xy$ and $[x,y,z]:=[[x,y],z]$.

\section{The simplicial set $E_\ast(q+1,G)$} \label{sec:sset}

In this section we define the coset poset, and then describe a simplicial set which models up to homotopy the coset posets we are interested in. Subsequently we address \cite[Conjecture 2.1]{Ok15} as well as a question asked in \cite{ACTg12}.

\begin{definition}
Let $G$ be a group and $\mathcal{F}$ a collection of subgroups of $G$. The \emph{coset poset} associated with $\mathcal{F}$ is the set $\mathscr{C}(\mathcal{F},G):=\{gH\in G/H\mid g\in G,\, H\in\mathcal{F}\}$ partially ordered by inclusion. Its \emph{order complex} is the simplicial complex whose $n$-simplices are in bijection with the chains $gH_0\subset \cdots \subset gH_n$ in $\mathscr{C}(\mathcal{F},G)$.
\end{definition}

We will not distinguish notationally between the coset poset, the order complex and its geometric realization, and refer to them collectively as the coset poset.

To fix some of our notation we recall the following definition.
\begin{definition}
Let $G$ be a group. Define a sequence of subgroups $\Gamma^{q}(G)\leq G$ inductively as follows:
\[
\Gamma^1(G):=G\, , \quad \Gamma^{q+1}(G):=[\Gamma^q(G),G]\, \textnormal{ for all }q>0.
\]
The group $G$ is \emph{nilpotent} if there is an integer $c\geq 0$ such that $\Gamma^{c+1}(G)= 1$. The least such integer $c$ is the \emph{nilpotency class} of $G$, which we denote by $\cl(G)$.
\end{definition}

Thus, $\cl(G)=0$ if and only if $G$ is trivial, and $\cl(G)\leq 1$ if and only if $G$ is abelian.

We will be interested in $\mathscr{C}(\mathcal{N}_{q+1}(G),G)$, where $\mathcal{N}_{q+1}(G)$ (or simply $\mathcal{N}_{q+1}$ if $G$ is understood) denotes the collection of subgroups of $G$ of nilpotency class \emph{strictly less than} $q+1$. Coset posets admit several homotopy equivalent models which are described in \cite[Section I]{AH90}. For a large portion of this paper we will find it convenient to work with a simplicial set model for $\mathscr{C}(\mathcal{N}_{q+1},G)$.

\begin{definition}
We call a tuple $(x_0,\dots,x_k)\in G^{k+1}$ \emph{affinely nil-$q$} if there exist $y\in G$ and $H\leq G$ with $\cl(H)\leq q$ such that $x_i\in yH$ for all $i=0,\dots,k$.
\end{definition}

In words, a tuple $(x_0,\dots,x_k)$ is affinely nil-$q$ if all its coordinates $x_i$ are contained in a single left coset of a nilpotent group of class at most $q$ (which is sometimes called a \emph{nil-$q$ group}). Clearly if $(x_0,\dots,x_k)$ is affinely nil-$q$, then so is the tuple $(x_0,\dots,\hat{x}_i,\dots,x_k)$ with $x_i$ omitted for any $i=0,\dots,k$.

\begin{definition}
Define a simplicial set $E_\ast(q+1,G)$ by taking as $k$--simplices
\[
E_k(q+1,G):=\{(x_0,\dots,x_k)\in G^{k+1}\mid (x_0,\dots,x_k)\textnormal{ is affinely nil-$q$} \}
\]
for each $k\geq 0$. The face maps $d_i\co E_{k}(q+1,G)\to E_{k-1}(q+1,G)$ and the degeneracy maps $s_i\co E_k(q+1,G)\to E_{k+1}(q+1,G)$ are, respectively, given by
\begin{alignat*}{2}
d_i(x_0,\dots,x_k) & =(x_0,\dots,\hat{x}_i,\dots,x_k) \\
 s_i(x_0,\dots,x_k) & =(x_0,\dots,x_i,x_i,\dots,x_k)\,,
\end{alignat*}
for $0\leq i\leq k$. We denote by $E(q+1,G)$ the geometric realization of $E_\ast(q+1,G)$.
\end{definition}

\begin{proposition} \label{prop:eqgvscosetposet}
There is a homotopy equivalence $E(q+1,G)\simeq \mathscr{C}(\mathcal{N}_{q+1},G)$.
\end{proposition}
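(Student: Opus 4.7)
The plan is to build a bisimplicial set sitting over both sides and apply the realization lemma twice. Define
$$
X_{p,k} := \bigl\{\bigl((g_0H_0 \subseteq \cdots \subseteq g_pH_p),\, (x_0,\dots,x_k)\bigr) : H_i \in \mathcal{N}_{q+1},\ x_j \in g_0H_0 \bigr\},
$$
with the obvious face and degeneracy maps acting separately on the chain and on the tuple; dropping $g_0 H_0$ from the chain is legal because $g_0H_0 \subseteq g_1H_1$, so the tuple still lies in the new smallest coset. Forgetting the tuple yields a map of bisimplicial sets $X_{\bullet,\bullet} \to N_\bullet(\mathscr{C}(\mathcal{N}_{q+1},G))$, viewing the target as constant in the $k$-direction; forgetting the chain yields $X_{\bullet,\bullet} \to E_\bullet(q+1,G)$, viewing the target as constant in the $p$-direction.

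For the first map, fix $p$ and observe that $X_{p,\bullet}$ decomposes as a disjoint union, indexed by $p$-chains $g_0H_0 \subseteq \cdots \subseteq g_pH_p$, of the sub-simplicial set $E_\bullet(g_0H_0)$ whose $k$-simplices are $(g_0H_0)^{k+1}$. Each $E_\bullet(gH)$ is the nerve of the indiscrete groupoid on the set $gH$ and is therefore contractible (any basepoint in $gH$ supplies an extra degeneracy). Hence, for every $p$, the map $X_{p,\bullet} \to N_p(\mathscr{C}(\mathcal{N}_{q+1},G))$ (regarded as constant in $k$) is a weak equivalence of simplicial sets, and the realization lemma produces $|X| \simeq \mathscr{C}(\mathcal{N}_{q+1},G)$.

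For the second map, fix $k$ and decompose $X_{\bullet,k}$ as a disjoint union, indexed by affinely nil-$q$ tuples $\bar{x}=(x_0,\dots,x_k)$, of the nerve of the sub-poset $\mathcal{C}(\bar{x}) \subseteq \mathscr{C}(\mathcal{N}_{q+1},G)$ of cosets containing $\bar{x}$. The algebraic heart of the argument is that this poset has a minimum: writing $x_i = yh_i$ in a common nil-$q$ coset $yH$, each $x_0^{-1}x_i = h_0^{-1}h_i$ lies in $H$, so $K := \langle x_0^{-1}x_1,\dots,x_0^{-1}x_k\rangle$ is a subgroup of the nil-$q$ group $H$ and hence itself nil-$q$; thus $x_0 K \in \mathcal{C}(\bar{x})$ is the minimum. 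A poset with a minimum has contractible nerve, so for every $k$ the map $X_{\bullet,k} \to E_k(q+1,G)$ (regarded as constant in $p$) is a weak equivalence, and the realization lemma gives $|X| \simeq E(q+1,G)$.

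Combining these two weak equivalences yields $\mathscr{C}(\mathcal{N}_{q+1},G) \simeq E(q+1,G)$. The main obstacle is setting up the level-wise decompositions carefully enough that the realization lemma applies; the algebraic input, namely that subgroups of nil-$q$ groups are nil-$q$ and therefore the generated subgroup $K$ provides the minimum of $\mathcal{C}(\bar{x})$, is immediate from the definition of nilpotency class via the lower central series.
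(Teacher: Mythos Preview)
Your argument is correct. The bisimplicial set is well-defined, both projections are level-wise weak equivalences for the reasons you give, and the realization lemma applies cleanly. The key algebraic step---that $\mathcal{C}(\bar{x})$ has $x_0K$ as minimum---is exactly right.

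The paper takes a different, much shorter route: it simply observes that the collection $\mathfrak{U}=\{gH\mid g\in G,\,H\in\mathcal{N}_{q+1}\}$ is a cover of $G$ closed under (nonempty) intersections, identifies $E(q+1,G)$ and $\mathscr{C}(\mathcal{N}_{q+1},G)$ with two models $|X^{\textnormal{simp}}(\mathfrak{U})|$ and $|F(\mathfrak{U})|$ from Abels--Holz \cite{AH90}, and then quotes their Theorems~1.4 and~1.6. Your proof is essentially an explicit unwinding of that citation in this special case: the ``closed under intersections'' hypothesis in Abels--Holz is precisely what makes your fibre poset $\mathcal{C}(\bar{x})$ have a minimum, and the bisimplicial/realization-lemma technique you use is the standard mechanism behind nerve-type comparison theorems of this sort. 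The paper's approach buys brevity and places the result in a general framework; yours is self-contained and makes transparent exactly where the nil-$q$ hypothesis enters.
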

\begin{proof}
This follows as a special case of the considerations in \cite[Section~I]{AH90}. In their notation, take $\mathfrak{U}:=\{gH\mid g\in G,\, H\in \mathcal{N}_{q+1}\}$ as a cover of $G$. Then our $E(q+1,G)$ is their $|X^{\textnormal{simp}}(\mathfrak{U})|$ and our $\mathscr{C}(\mathcal{N}_{q+1},G)$ is their $|F(\mathfrak{U})|$  . Since $\mathfrak{U}$ is closed under intersections, it follows from Theorem 1.4 and 1.6 in \cite{AH90} that there is a homotopy equivalence $|X^{\textnormal{simp}}(\mathfrak{U})|\simeq |F(\mathfrak{U})|$.
\end{proof}

For the remainder of this paper we will use the models $E(q+1,G)$ and $\mathscr{C}(\mathcal{N}_{q+1},G)$ interchangeably, and we will not always be explicit about this.

\begin{remark} \label{rem:eg}
Note that $E_\ast(q+1,G)$ is a simplicial subset of $E_\ast G$, the latter being defined in the same fashion but without the condition that the simplices $(x_0,\dots,x_k)$ be affinely nil-$q$, i.e., $E_kG=G^{k+1}$. The realization $EG$ is a contractible space on which $G$ acts freely, so that $BG:=EG/G$ is a classifying space for $G$. Our notation $E(q+1,G)$ derives from \cite{ACTg12}, where the spaces $E(q+1,G)$ were studied -- somewhat independently from the literature on coset posets -- in relation to a filtration of the classifying space $BG$.
\end{remark}

\begin{remark}
It can sometimes be convenient to work with $E(q+1,G)$ instead of $\mathscr{C}(\mathcal{N}_{q+1},G)$. For example, it is immediate that for any pair of groups $G$ and $H$ there is a natural homeomorphism
\begin{equation} \label{eq:eqgproduct}
E(q+1,G\times H) \cong E(q+1,G)\times E(q+1,H)
\end{equation}
induced by the projections onto $G$ and $H$, while for the coset poset this holds only up to homotopy and is not as easy to see. Note that as a consequence of (\ref{eq:eqgproduct}), if $G$ is assumed nilpotent and finite, it is fine to assume that $G$ is a $p$-group when studying the homotopy type of $\mathscr{C}(\mathcal{N}_{q+1},G)$. Also notice that $\mathscr{C}(\mathcal{N}_{q+1},G)$ is in general not homotopy equivalent to a wedge of spheres, unlike some of the classical coset posets studied in the literature.
\end{remark}

For a subset $S\subseteq G$ let $\langle S\rangle\leqslant G$ denote the subgroup of $G$ generated by $S$. We will frequently use the following characterization of affinely nil-$q$ tuples whose proof is obvious.

\begin{lemma}
A tuple $(x_0,\dots,x_k)\in G^{k+1}$ is affinely nil-$q$ if and only if
\[
\cl( \langle x_k^{-1}x_0,\dots,x_k^{-1}x_{k-1}\rangle)\leq q\,.
\]
\end{lemma}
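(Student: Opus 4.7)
The plan is to unpack both implications directly from the definition of ``affinely nil-$q$''; the author flags the proof as obvious, so I expect each direction to be a one-line calculation, and the only ``work'' is choosing good names for the elements.

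For the forward direction, I would assume $(x_0,\dots,x_k)$ is affinely nil-$q$, so that by definition there exist $y\in G$ and $H\leq G$ with $\cl(H)\leq q$ such that $x_i\in yH$ for every $i$. Writing $x_i=yh_i$ with $h_i\in H$, the product $x_k^{-1}x_i = h_k^{-1}y^{-1}yh_i = h_k^{-1}h_i$ lies in $H$ for every $i<k$. Hence $\langle x_k^{-1}x_0,\dots,x_k^{-1}x_{k-1}\rangle\leq H$, and since the nilpotency class is monotone under passing to subgroups, this subgroup has class $\leq q$.

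For the reverse direction, I would simply reverse-engineer the witnesses. Set $H:=\langle x_k^{-1}x_0,\dots,x_k^{-1}x_{k-1}\rangle$ and $y:=x_k$; by hypothesis $\cl(H)\leq q$. For $i<k$ we have $x_i = x_k\cdot(x_k^{-1}x_i)\in yH$, and trivially $x_k=x_k\cdot 1\in yH$, so all coordinates lie in the single left coset $yH$ of a nil-$q$ subgroup, which is exactly the definition of affinely nil-$q$.

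There is no real obstacle here beyond bookkeeping; the only mildly non-obvious point worth flagging is the asymmetry of the statement in the index $k$, which is purely cosmetic because $\langle x_k^{-1}x_0,\dots,x_k^{-1}x_{k-1}\rangle$ is conjugate (via $x_k^{-1}x_j$) to $\langle x_j^{-1}x_0,\dots,\widehat{x_j^{-1}x_j},\dots,x_j^{-1}x_k\rangle$ for any other choice of basepoint $j$, so the class does not depend on which coordinate is singled out.
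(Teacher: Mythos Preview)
Your proof is correct and is exactly the argument the paper has in mind; the paper omits the proof entirely, calling it obvious, and what you wrote is the natural unpacking of the definition in both directions. One small inaccuracy in your final side remark: the subgroups $\langle x_k^{-1}x_0,\dots,x_k^{-1}x_{k-1}\rangle$ and $\langle x_j^{-1}x_0,\dots,x_j^{-1}x_k\rangle$ are in fact \emph{equal}, not merely conjugate, since $x_j^{-1}x_i=(x_k^{-1}x_j)^{-1}(x_k^{-1}x_i)$; your conjugation map does not actually send one generating set to the other, but the stronger conclusion of equality makes the basepoint-independence even clearer.
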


The next result gives a sufficient condition for $E(q+1,G)$ to be $(k-1)$--connected, i.e., for $\pi_i(E(q+1,G))=0$ for all $ i\leq k-1$.

\begin{proposition} \label{prop:sufficient1connected}
Let $G$ be a group and $k\geq 1$ an integer. Suppose that every $k$-generator subgroup of $G$ is nilpotent of class at most $q$. Then $E(q+1,G)$ is $(k-1)$--connected.
\end{proposition}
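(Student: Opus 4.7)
The plan is to leverage the relationship between $E_\ast(q+1,G)$ and the full simplicial set $E_\ast G$ (the bar construction whose realization $EG$ is contractible) at the level of the $k$-skeleton.

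First I would apply the preceding lemma together with the hypothesis to show that $E_\ast(q+1,G)$ and $E_\ast G$ agree in all simplicial degrees $j\leq k$. Given $(x_0,\dots,x_j)\in G^{j+1}$ with $j\leq k$, the lemma says this tuple is affinely nil-$q$ precisely when the subgroup $\langle x_j^{-1}x_0,\dots,x_j^{-1}x_{j-1}\rangle$ has class $\leq q$. This subgroup has at most $j\leq k$ generators, and any such subgroup can be viewed as a $k$-generator subgroup by padding with the identity, so the hypothesis forces class $\leq q$. Consequently every element of $G^{j+1}$ is affinely nil-$q$ for $j\leq k$, and the inclusion
\[
\sk_k E_\ast(q+1,G)=\sk_k E_\ast G
\]
is in fact an equality of simplicial sets.

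Next I would invoke the general fact that for any simplicial set $X$, the inclusion $\sk_k X\hookrightarrow X$ induces an isomorphism on $\pi_i$ for $i\leq k-1$ (since all higher-dimensional attaching cells can only modify $\pi_j$ for $j\geq k$). Applying this both to $E_\ast G$ and to $E_\ast(q+1,G)$, together with the contractibility of $EG$ (recalled in Remark \ref{rem:eg}), gives
\[
\pi_i(E(q+1,G))\cong \pi_i(\sk_k E(q+1,G))=\pi_i(\sk_k EG)\cong \pi_i(EG)=0
\]
for all $i\leq k-1$. This is precisely $(k-1)$-connectivity.

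The only substantive point is the skeletal comparison, and this is where the condition on $k$-generator subgroups is used in exactly the right way: it is just strong enough to kill the affine nil-$q$ constraint up to and including dimension $k$, which controls $\pi_i$ through dimension $k-1$. I do not anticipate any real obstacle, since once the skeletal identification is in place the connectivity statement is immediate from the contractibility of $EG$.
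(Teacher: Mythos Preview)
Your proof is correct and follows essentially the same approach as the paper: both arguments show that the hypothesis forces the simplicial $k$-skeleta of $E_\ast(q+1,G)$ and $E_\ast G$ to coincide, and then deduce $(k-1)$-connectivity from the fact that $\pi_i$ for $i\leq k-1$ depends only on the $k$-skeleton together with the contractibility of $EG$. Your write-up is slightly more explicit about the lower-degree simplices and the padding to $k$ generators, but the logic is identical.
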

\begin{proof}
By assumption, $\cl( \langle x_k^{-1}x_0,\dots,x_k^{-1}x_{k-1}\rangle)\leq q$ for every $(x_0,\dots,x_k)\in G^{k+1}$, hence every $(k+1)$--tuple is affinely nil--$q$. Therefore, the simplicial $k$--skeleta of $E(q+1,G)$ and $EG$ agree (see Remark \ref{rem:eg}). As for every $i\leq k-1$ the $i$--th homotopy group of a simplicial set depends only on the simplicial $k$--skeleton and $EG$ is contractible, it follows that $E(q+1,G)$ is $(k-1)$--connected.
\end{proof}

\begin{definition}
A group $G$ is called a \emph{$2$-Engel group} if $[x,y,y]=1$ for all $x,y\in G$.
\end{definition}

Every $2$-Engel group is nilpotent of class $\leq 3$, and the Burnside group $B(3,3)$ (see Section \ref{sec:burnside}) is an example of a $2$-Engel group of class exactly $3$.

We are now in a position to disprove \cite[Conjecture 2.1]{Ok15}. The $2$-Engel groups can be characterized as those groups for which every two generator subgroup is nilpotent of class at most $2$ \cite[Theorem 7.15(iv)]{Ro72}. Moreover, in \cite[Corollary 1]{BM71} a nonsolvable group $G$ is constructed all of whose two generator subgroups are nilpotent of class at most $3$. Therefore, by Proposition \ref{prop:sufficient1connected}
\begin{itemize}
\item if $G$ is a $2$-Engel group, then $E(3,G)$ is simply-connected, yet $G$ need not be nilpotent of class $\leq 2$,
\item if $G$ is the group constructed in \cite{BM71}, then $E(q+1,G)$ is simply-connected for all $q\geq 3$, yet $G$ is not nilpotent.
\end{itemize}
This shows that \cite[Conjecture 2.1]{Ok15} fails to hold for every $q\geq 2$, unless further assumptions on $G$ are made.\medskip

We end this section by addressing a different question raised by Adem, F. Cohen, and Torres-Giese \cite{ACTg12}. The group $G$ acts freely on $E(q+1,G)$, so that $E(q+1,G)\to E(q+1,G)/G=:B(q+1,G)$ is a covering. In \cite[p.~100]{ACTg12} it was asked, motivated by examples found therein, if $B(q+1,G)$ is an Eilenberg-MacLane space of type $K(\pi,1)$. By \cite[Theorem 4.6]{ACTg12} this is related to the question of whether the graph of groups formed by the maximal elements in $\mathcal{N}_{q+1}$ and their pairwise intersections is in fact a \emph{tree of groups}.

For example, $B(q+1,Q_{2^r})$ is a $K(\pi,1)$ for all $r\geq 3$ and $q\geq 1$ where $Q_{2^r}$ is the generalized quaternion group of order $2^r$ \cite[\S 7]{ACV20}, while the extraspecial $p$-groups of rank $\geq 4$ were amongst the first examples of groups found for which $B(2,G)$ is not a $K(\pi,1)$ (see \cite{Ok15}). Here we provide a class of groups for which $B(3,G)$ is \emph{not} a $K(\pi,1)$, thus answering the question raised in \cite{ACTg12} for $q=2$.

\begin{corollary} \label{cor:kpi1}
Let $G$ be a $2$-Engel group with $\cl(G)=3$. Then $B(3,G)$ is not an Eilenberg-MacLane space of type $K(\pi,1)$.
\end{corollary}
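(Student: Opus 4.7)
The plan is to assume for contradiction that $B(3,G)$ is a $K(\pi,1)$ and derive an obstruction from Theorem~\ref{thm:main3groupsIntro}(ii).

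The first step is to identify $\pi_1(B(3,G))$ with $G$. Since $G$ is $2$-Engel, every two-generator subgroup of $G$ has nilpotency class $\leq 2$ (see \cite[Theorem~7.15(iv)]{Ro72}), so Proposition~\ref{prop:sufficient1connected} applied with $k=2$ yields that $E(3,G)$ is simply connected. As recalled in the excerpt, $G$ acts freely on $E(3,G)$ with quotient $B(3,G)$, so $E(3,G)\to B(3,G)$ is in fact the universal covering and hence $\pi_1(B(3,G))\cong G$. If $B(3,G)$ were a $K(\pi,1)$, it would then have to be a $K(G,1)$, and its universal cover $E(3,G)$ would have to be contractible.

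The second step is to obstruct this contractibility by invoking Theorem~\ref{thm:main3groupsIntro}. A $2$-Engel group is already known to be nilpotent of class $\leq 3$, as stated in the excerpt. To verify the remaining hypothesis that $[[G,G],G]$ has exponent dividing $3$, I would appeal to Levi's identity $[x,y,z]^3=1$, which holds in any $2$-Engel group, together with the observation that in a class-$3$ nilpotent group $\Gamma^3(G)=[[G,G],G]$ is central, hence abelian, and is generated by the triple commutators $[x,y,z]$. With this verified, Theorem~\ref{thm:main3groupsIntro}(ii) applies, and since $\cl(G)=3$ it asserts that $\mathscr{C}(\mathcal{N}_3,G)\simeq E(3,G)$ is \emph{not} $2$-connected; in particular it is not contractible, contradicting the conclusion of the first step.

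The only step requiring external input is the verification of the exponent-$3$ condition on $[[G,G],G]$, which reduces to citing Levi's identity. Everything else is a standard covering-space calculation of $\pi_1$ under a free group action, combined with the definition of an Eilenberg--MacLane space and a direct appeal to the structural theorem proved earlier in the paper. No computation inside $G$ itself is needed.
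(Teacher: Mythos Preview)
Your proof is correct and follows essentially the same approach as the paper: verify the exponent-$3$ hypothesis for $2$-Engel groups (the paper cites \cite[Theorem 7.14]{Ro72}, you cite Levi's identity), then invoke Theorem~\ref{thm:main3groupsIntro} to see that $E(3,G)$ is simply-connected but not $2$-connected, and pass to $B(3,G)$ via the covering. The explicit identification $\pi_1(B(3,G))\cong G$ is not actually needed---once $\pi_2(E(3,G))\neq 0$ the covering forces $\pi_2(B(3,G))\neq 0$, ruling out $K(\pi,1)$ for any $\pi$---but it does no harm.
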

\begin{proof}
As $G$ is a $2$-Engel group, $\exp(\Gamma^3(G))$ divides $3$, see \cite[Theorem 7.14]{Ro72}. By Theorem \ref{thm:main3groupsIntro}, $E(3,G)$ is simply-connected but not $2$-connected. Hence, $E(3,G)$ is not a $K(\pi,1)$. Because $E(3,G)\to B(3,G)$ is a $G$-covering, neither is $B(3,G)$.
\end{proof}

\section{Connectivity of $E(3,G)$} \label{sec:connectivity}

The main objective of this section is the proof of Theorem \ref{thm:main3groupsIntro}. In fact, we will prove the more general Theorem \ref{thm:main3groups} below, of which Theorem \ref{thm:main3groupsIntro} will be a straightforward consequence. At the end of the section, we prove another result (Theorem \ref{thm:2genp-grcl=3}) which gives some insight into the connectivity of $\mathscr{C}(\mathcal{N}_3,G)$ when $G$ is a $2$-generator $p$-group of class $\leq 3$.

In this section we assume that $G$ is nilpotent of class at most $3$. Then $\Gamma^3(G)$ is an abelian group, whose group law we will write additively. Let $e$ be the exponent of $\Gamma^3(G)$ (we set $e:=0$ if $\Gamma^3(G)$ does not have an exponent), so that $\Gamma^3(G)$ is a $\Z/e$-module. Let $C_\ast(E(3,G))$ denote the Moore complex with $\Z/e$-coefficients associated with the simplicial set $E_\ast(3,G)$, i.e.,
\[
C_k(E(3,G)):=\bigoplus_{(x_0,\dots,x_k)\in E_k(3,G)} \Z/e
\]
and the differentials $\partial\co C_k(E(3,G))\to C_{k-1}(E(3,G))$ are given by
\[
\partial(x_0,\dots,x_k)=\sum_{i=0}^k(-1)^i(x_0,\dots,\hat{x}_i,\dots,x_k)\, .
\]

We will prove Theorem \ref{thm:main3groups} by an obstruction argument using the following obstruction classes:

\begin{definition}
Define
\begin{itemize}
\item[(i)] a $\Gamma^3(G)$--valued 1-cochain $\omega_1 \co C_1(E(3,G)) \to \Gamma^3(G)$ by
\[
\omega_1(x,y) := [x,y,xy]
\]
and extending linearly,\smallskip
\item[(ii)] a $\Gamma^3(G)$--valued 2-cochain $\omega_2 \co C_2(E(3,G)) \to \Gamma^3(G)$ by
\[
\omega_2(x,y,z) := [x,y,z]
\]
and extending linearly.
\end{itemize}
\end{definition}

In the following two lemmas we will use frequently and without mention the fact that when $\cl(G)\leq 3$ the commutator bracket
\[
[-,-,-]\co G\times G\times G\to \Gamma^3(G)
\]
is a homomorphism in each argument. This follows easily from iterated application of the two identities
\begin{equation*} \label{eq:2comm}
[xy,z]=[x,z]^y[y,z]\,,\quad [x,yz]=[x,z][x,y]^z\,,
\end{equation*}
which are valid in any group \cite[5.1.5(ii)]{Ro96}, and the fact that $\Gamma^3(G)$ is central. Let $Z(G)$ be the center of $G$.

\begin{lemma} \label{lem:cocyclew1}
Suppose that for all $H\leq G$ with $\cl(H)\leq 2$ we have that $[H,H]^3\leq Z(G)$. Then $\omega_1$ is a cocycle.
\end{lemma}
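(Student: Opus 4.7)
The plan is to verify directly that $\delta\omega_1$ vanishes on every affinely nil-$2$ triple $(x,y,z)\in E_2(3,G)$, i.e., to show that
\[
\omega_1(y,z)-\omega_1(x,z)+\omega_1(x,y) \;=\; [y,z,yz]-[x,z,xz]+[x,y,xy] \;=\; 0
\]
in $\Gamma^3(G)$. By the characterization of affinely nil-$2$ tuples, I may write $x=gu$, $y=gv$, $z=gw$ for some $g\in G$ and $u,v,w\in H$ with $\cl(H)\leq 2$.

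The first step is to expand each term $\omega_1(gu,gv)=[gu,gv,gugv]$ using the trilinearity of $[-,-,-]\colon G^3\to\Gamma^3(G)$ in all three slots. This produces a sum of terms of the form $[a_1,a_2,a_3]$ with $a_i\in\{g,u,v\}$. I would simplify these using three standard identities valid in a class-$3$ group: (a) alternation in the first two slots, $[a,b,c]=-[b,a,c]$, which kills the $[g,g,\cdot]$ contributions; (b) the relations $[u,v,u]=[u,v,v]=0$, which hold because $[u,v]\in Z(H)$; and (c) the Jacobi-type identity $[a,b,c]+[b,c,a]+[c,a,b]=0$, obtained from the Hall--Witt identity modulo $\Gamma^4(G)=1$. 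A careful but routine computation collapses the expansion to
\[
\omega_1(gu,gv)\;=\;2[u,g,g]-2[v,g,g]+3[u,v,g]+[u,g,u]-[v,g,v].
\]

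Writing the analogous expressions for $\omega_1(gu,gw)$ and $\omega_1(gv,gw)$ and forming the alternating sum, the ``diagonal'' contributions (those depending on a single variable among $u,v,w$) cancel in pairs, leaving
\[
\omega_1(y,z)-\omega_1(x,z)+\omega_1(x,y)\;=\;3\bigl([v,w,g]-[u,w,g]+[u,v,g]\bigr).
\]

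Finally, I would invoke the hypothesis: for $u,v\in H$ we have $[u,v]\in[H,H]$, hence $[u,v]^3\in Z(G)$. Since $[u,v]\in\Gamma^2(G)$, the identity $[c^n,g]=[c,g]^n$ holds for $c\in\Gamma^2(G)$ (the commutator-expansion corrections land in $\Gamma^4(G)=1$), so $3[u,v,g]=[[u,v]^3,g]=0$ in $\Gamma^3(G)$, and likewise for the other two brackets. This yields the desired vanishing. The main obstacle is the bookkeeping in the multilinear expansion: it has to be organised so that alternation and the Jacobi identity collapse the several dozen expansion terms into the clean form $3\bigl([v,w,g]-[u,w,g]+[u,v,g]\bigr)$, making transparent the role of the $[H,H]^3\leq Z(G)$ hypothesis (and foreshadowing why the exponent-$3$ assumption on $\Gamma^3(G)$ appears in Theorem~\ref{thm:main3groupsIntro}).
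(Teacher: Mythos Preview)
Your argument is correct. Both your proof and the paper's proceed by the same mechanism---trilinearity of $[-,-,-]$ in class~$3$, alternation in the first two slots, the Hall--Witt/Jacobi identity, and finally the hypothesis to kill the residual ``$3\cdot[-,-,g]$'' term---so the approaches are essentially the same.

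The organization differs, and yours is cleaner. The paper uses $z$ itself as the coset representative (in your notation $g=z$, $u=z^{-1}x$, $v=z^{-1}y$, $w=1$) and expands $[x,y,xy]=[zz^{-1}x,y,zz^{-1}xy]$ in a sequence of somewhat ad hoc substitutions, eventually reaching $\omega_1(x,y)+\omega_1(y,z)-\omega_1(x,z)=[z^{-1}x,z^{-1}y,z^3]$. This is exactly your formula $3([v,w,g]-[u,w,g]+[u,v,g])$ specialized to $w=1$. By keeping a general coset representative $g$ and expanding all three $\omega_1$-values symmetrically, you make the cancellation of the ``diagonal'' terms $[u,g,g]$, $[u,g,u]$, etc.\ transparent and isolate the factor~$3$ from the outset, which better explains why the exponent-$3$ hypothesis on $\Gamma^3(G)$ enters later.
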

\begin{proof}
Let $(x,y,z)\in E_2(3,G)$ be an affinely nil-$2$ triple. We must verify that $\omega_1(\partial(x,y,z))=0$, i.e., we must verify the cocycle condition
\[
\omega_1(y,z)-\omega_1(x,z)+\omega_1(x,y)=0\, .
\]
By definition of $\omega_1$, this is equivalent to
\[
[y,z,yz]-[x,z,xz]+[x,y,xy]=0
\]
in $\Gamma^3(G)$. First, we note that
\[
[x,y,xy]+[y,x,yx]=[x,xy,xy]+[y,xy,xy]=[xy,xy,xy]=0\, ,
\]
i.e., $\omega_1(x,y)=-\omega_1(y,x)$. Now
\[
[x,y,xy]=[zz^{-1}x,y,zz^{-1}xy]=[z,y,zy]+[z^{-1}x,y,zy]+[zz^{-1}x,y,z^{-1}x]\,,
\]
i.e.,
\[
\omega_1(x,y)+\omega_1(y,z)=[z^{-1}x,y,zy]+[zz^{-1}x,y,z^{-1}x]\, .
\]
The first summand on the right hand side equals
\begin{equation*}
\begin{split}
& [z^{-1}x,zz^{-1}y,z^{-1}yz^2]\\&\quad\quad=[z^{-1}x,z,z^{-1}y]+[z^{-1}x,z,z^2]+\underbrace{[z^{-1}x,z^{-1}y,z^{-1}y]}_{A}+[z^{-1}x,z^{-1}y,z^2]\,,
\end{split}
\end{equation*}
while the second summand equals
\begin{equation*}
\begin{split}
& [zz^{-1}x,zz^{-1}y,z^{-1}x]\\&\quad\quad=\underbrace{[z,z,z^{-1}x]}_{B}+[z,z^{-1}y,z^{-1}x]+[z^{-1}x,z,z^{-1}x]+\underbrace{[z^{-1}x,z^{-1}y,z^{-1}x]}_{C}\,.
\end{split}
\end{equation*}
The term labelled $B$ vanishes trivially, while both $A$ and $C$ vanish because $(x,y,z)$ is affinely nil-$2$ and hence $\cl(\langle z^{-1}x,z^{-1}y\rangle)\leq 2$. Combining the remaining terms we obtain
\begin{equation*}
\begin{split}
&\omega_1(x,y)+\omega_1(y,z)\\&\quad\quad=\underbrace{[z^{-1}x,z,zx]}_{=\omega_1(x,z)}+\underbrace{[z^{-1}x,z,z^{-1}y]+[z,z^{-1}y,z^{-1}x]}_{D}+[z^{-1}x,z^{-1}y,z^2]\, .
\end{split}
\end{equation*}
The term labelled $D$ can be simplified using the Hall-Witt identity. Since $\Gamma^3(G)$ is central in $G$, the Hall-Witt identity \cite[5.1.5(iv)]{Ro96} becomes
\[
[a,b,c]+[b,c,a]+[c,a,b]=0\quad \forall\,a,b,c\in G\, .
\]
Applied to $D$ this yields $D=[z^{-1}x,z^{-1}y,z]$, and combined with the last term on the right hand side we obtain
\[
\omega_1(x,y)+\omega_1(y,z)-\omega_1(x,z)=[z^{-1}x,z^{-1}y,z^3]\, .
\]
The commutator equals $[[z^{-1}x,z^{-1}y]^3,z]$, and since $\cl(\langle z^{-1}x,z^{-1}y\rangle)\leq 2$ it vanishes by the assumption on $G$. The cocycle condition now follows.
\end{proof}

\begin{lemma} \label{lem:cocyclew2}
Suppose that $G$ is a 2-Engel group. Then $\omega_2$ is a cocycle.
\end{lemma}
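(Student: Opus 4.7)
The strategy parallels that of Lemma~\ref{lem:cocyclew1}. Given an affinely nil-$2$ tuple $(x_0,x_1,x_2,x_3)\in E_3(3,G)$, I would choose a subgroup $H\leq G$ with $\cl(H)\leq 2$ such that $h_i:=x_3^{-1}x_i\in H$ for $i=0,1,2$. In particular $\cl(\langle h_0,h_1,h_2\rangle)\leq 2$, so every triple commutator whose entries are drawn only from $\{h_0,h_1,h_2\}$ vanishes. The cocycle condition $\omega_2(\partial(x_0,x_1,x_2,x_3))=0$ reads
\[
[x_1,x_2,x_3]-[x_0,x_2,x_3]+[x_0,x_1,x_3]-[x_0,x_1,x_2]=0,
\]
and after substituting $x_i=x_3h_i$, the trilinearity of $[-,-,-]\co G\times G\times G\to\Gamma^3(G)$ expands the left-hand side into a finite sum of monomials in $x_3,h_0,h_1,h_2$.

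The next step is to simplify these monomials using three ingredients: first, $[h_i,h_j,h_k]=0$ for all $i,j,k\in\{0,1,2\}$; second, the $2$-Engel identity $[a,b,b]=1$ together with its polarization $[a,b,c]=-[a,c,b]$ in $\Gamma^3(G)$, which kills every monomial with a repeated $x_3$ in the last two slots; and third, the centralized Hall-Witt identity $[a,b,c]+[b,c,a]+[c,a,b]=0$ recalled in Lemma~\ref{lem:cocyclew1}. A careful bookkeeping should collapse most monomials in pairs, and Hall-Witt, together with the fact that $[h_1,h_2]$ is inverted when $h_1$ and $h_2$ are swapped, reassembles the survivors into a single residue of the shape $3\,[h_1,h_2,x_3]$.

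The finishing blow is the classical consequence of the $2$-Engel hypothesis that $\exp(\Gamma^3(G))$ divides $3$ (see \cite[Theorem~7.14]{Ro72}), which forces $3\,[h_1,h_2,x_3]=0$ in $\Gamma^3(G)$ and hence also in the $\Z/e$-valued Moore complex, since $e=\exp(\Gamma^3(G))$. The main obstacle is essentially combinatorial: keeping track of the roughly twenty monomials produced by the four expansions and identifying the precise Hall-Witt-plus-antisymmetry manipulation that makes the factor $3$ surface. A cleaner organization of the proof would be to first establish that on a $2$-Engel group the map $(a,b,c)\mapsto[a,b,c]$ descends to a \emph{fully} alternating trilinear form into $\Gamma^3(G)/3\Gamma^3(G)$; once this is known, the cocycle identity for any affinely nil-$2$ tuple follows formally, as the alternating sum is over a degenerate simplex that, after translation, lies in a nil-$2$ subgroup together with only one extra direction.
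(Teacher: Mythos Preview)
Your strategy is sound and in fact closely mirrors the paper's own, but the paper's execution is considerably more economical. Rather than expanding all four terms of $\omega_2(\partial(x_0,x_1,x_2,x_3))$ after substituting $x_i=x_3h_i$, the paper expands only the single bracket $[x_0^{-1}x_1,x_0^{-1}x_2,x_0^{-1}x_3]$ (which vanishes by the affinely nil-$2$ hypothesis) and reads off the cocycle identity almost immediately: two of the six monomials vanish by the $2$-Engel identity $[a,b,b]=1$, and a single application of the cyclic relation $[a,b,c]=[c,a,b]$, valid in any $2$-Engel group, converts the remaining four into precisely the desired alternating sum. No appeal to the exponent of $\Gamma^3(G)$ is needed.

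Your predicted residue of $3\,[h_1,h_2,x_3]$ is in fact an artefact of incomplete simplification. The identities you already invoke---antisymmetry $[a,b,c]=-[b,a,c]$ and the polarized $2$-Engel relation $[a,b,c]=-[a,c,b]$---together force $[-,-,-]$ to be \emph{fully} alternating as a map into $\Gamma^3(G)$ (not merely into $\Gamma^3(G)/3\Gamma^3(G)$); this is exactly the cyclic identity the paper cites. With that, your expansion collapses to $0$ on the nose, and the detour through $\exp(\Gamma^3(G))\mid 3$ is superfluous. So your plan works, but the ``finishing blow'' is not the exponent fact: it is recognising that the $2$-Engel bracket is already alternating in all three variables.
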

\begin{proof}
Let $(x,y,z,w)\in E_3(3,G)$ be an affinely nil-$2$ tuple. We must show that $\omega_2(\partial(x,y,z,w))=0$, i.e., that
\begin{equation} \label{eq:cocycle2}
[y,z,w]-[x,z,w]+[x,y,w]-[x,y,z]=0
\end{equation}
in $\Gamma^3(G)$. First, we note that
\begin{equation*}
\begin{split}
& [x^{-1}y,x^{-1}z,x^{-1}w]\\&\quad=\underbrace{[x^{-1},z,x^{-1}]}_{A}+[x^{-1},z,w]+\underbrace{[y,x^{-1},x^{-1}]}_{B}+[y,x^{-1},w]+[y,z,x^{-1}]+[y,z,w]\,.
\end{split}
\end{equation*}
The terms labelled $A$ and $B$ vanish because $G$ is a $2$-Engel group. Moreover, since $(x,y,z,w)$ is affinely nil-$2$, the left hand side vanishes. Rewriting the remaining terms on the right hand side we obtain
\[
0=-[x,z,w]+[x,y,w]-\underbrace{[y,z,x]}_{C}+[y,z,w]\, .
\]
In every $2$-Engel group $G$ the identity
\[
[a,b,c]=[c,a,b]\quad \forall\, a,b,c\in G
\]
holds \cite[12.3.6(ii)]{Ro96}. Therefore, $C=[x,y,z]$ and the identity (\ref{eq:cocycle2}) follows.
\end{proof}

We can now prove the following more general form of Theorem \ref{thm:main3groupsIntro}.

\begin{theorem} \label{thm:main3groups}
Let $G$ be a nilpotent group of class $\cl(G)\leq 3$ and let $e\in \N\cup \{0\}$ be the exponent of $\Gamma^3(G)$. Suppose that for every $H\leq G$ with $\cl(H)\leq 2$ we have that $[H,H]^3\leq Z(G)$. Then
\begin{itemize}
\item[(i)] $H_1(E(3,G);\Z/e)=0$ if and only if $G$ is a 2-Engel group\smallskip
\item[(ii)] $H_i(E(3,G);\Z/e)=0$ for $i=1,2$ if and only if $\cl(G)\leq 2$.
\end{itemize}
\end{theorem}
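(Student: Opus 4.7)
The plan is to exploit the obstruction cocycles $\omega_1$ and $\omega_2$ of Lemmas \ref{lem:cocyclew1} and \ref{lem:cocyclew2}. Under the standing hypothesis $\omega_1$ is a cocycle, and when $G$ is additionally $2$-Engel so is $\omega_2$; each thus descends to a $\Z/e$--linear map $\bar\omega_i\co H_i(E(3,G);\Z/e)\to\Gamma^3(G)$. I would prove the two "if" directions by soft connectivity arguments and the two "only if" directions by exhibiting explicit cycles on which the appropriate $\omega_i$ is nonzero.

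For the "if" directions, if $G$ is $2$-Engel then a classical theorem of Levi \cite[Theorem 7.15(iv)]{Ro72} says every $2$--generator subgroup of $G$ is nilpotent of class $\leq 2$; Proposition \ref{prop:sufficient1connected} (with $k=q=2$) then yields $\pi_1(E(3,G))=0$, hence $H_1(E(3,G);\Z/e)=0$. If $\cl(G)\leq 2$ then every tuple in $G$ is automatically affinely nil-$2$, so $E(3,G)=EG$ is contractible.

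For the "only if" direction of (i), assuming $G$ is not $2$-Engel, I would use the $1$--cycle $c=(1,a)+(a,b)+(b,1)$, for which $\omega_1(c)=[a,b,ab]=[a,b,a]+[a,b,b]$ by centrality of $\Gamma^3(G)$. The combinatorial core is that if this quantity vanished for all $a,b$, then substituting $ab$ for $b$ and using trilinearity of $[-,-,-]$ on $\Gamma^3(G)$ would give $2[a,b,a]+[a,b,b]=0$; combined with the original identity this forces $[a,b,a]=[a,b,b]=0$ for all $a,b$, contradicting the failure of the $2$-Engel condition. Hence some such $c$ satisfies $\omega_1(c)\neq 0$, so $\bar\omega_1\neq 0$ and $H_1(E(3,G);\Z/e)\neq 0$. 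I expect extracting this implication from the cocycle identity to be the main obstacle, since it is where the geometric content of the vanishing of $\omega_1$ gets converted into the purely group-theoretic $2$-Engel condition.

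For the "only if" direction of (ii), assume $\cl(G)=3$. If $G$ is not $2$-Engel, part (i) already gives $H_1\neq 0$; otherwise $G$ is a $2$-Engel group, so by Lemma \ref{lem:cocyclew2} $\omega_2$ is a cocycle, and moreover in this case every $3$--tuple of $G$ is automatically affinely nil-$2$. The standard tetrahedral $2$--cycle
\[
c=(x,y,z)-(1,y,z)+(1,x,z)-(1,x,y)
\]
then lies in $C_2(E(3,G);\Z/e)$, and a direct evaluation gives $\omega_2(c)=[x,y,z]$. Choosing $x,y,z$ with $[x,y,z]\neq 0$ (which exist since $\Gamma^3(G)\neq 0$) shows $\bar\omega_2\neq 0$ and hence $H_2(E(3,G);\Z/e)\neq 0$, completing the proof.
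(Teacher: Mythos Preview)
Your proposal is correct and follows essentially the same route as the paper: the same obstruction cocycles $\omega_1,\omega_2$, the same triangular and tetrahedral test cycles, and the same soft arguments for the ``if'' directions. The only cosmetic difference is in part (i): from $[a,b,ab]=0$ you deduce the $2$-Engel identity by substituting $b\mapsto ab$ and subtracting, whereas the paper observes $[x,y,xy]=[x,xy,xy]$ and then substitutes $y\mapsto x^{-1}y'$ to obtain $[x,y',y']=0$ directly.
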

\begin{proof}
(i)  By the assumptions of the theorem and by Lemma \ref{lem:cocyclew1} a cohomology class $[\omega_1]\in H^1(E(3,G);\Gamma^3(G))$ is defined. For every $x,y\in G$ the chain
\[
c(x,y):=(x,y)+(y,1)+(1,x)\in C_1(E(3,G))
\]
is a homology cycle. Now $[\omega_1]$ defines a homomorphism
\[
-\cap \omega_1 \co H_1(E(3,G);\Z/e)\to \Gamma^3(G)
\]
which maps $[c(x,y)]$ to $[x,y,xy]$. If $H_1(E(3,G);\Z/e)=0$, then $-\cap \omega_1=0$, hence
\[
0= c(x,y) \cap \omega_1=[x,y,xy]
\]
for all $x,y\in G$. Now $[x,y,xy]=[x,xy,xy]$ and replacing $y$ by $x^{-1}y'$ we find that $[x,y',y']=0$ for all $x,y'\in G$. Hence, $G$ is a 2-Engel group. Conversely, if $G$ is a $2$-Engel group, then $E(3,G)$ is simply-connected by Proposition \ref{prop:sufficient1connected}. In particular, $H_1(E(3,G);\Z/e)=0$.

(ii) Suppose that $H_i(E(3,G);\Z/e)=0$ for $i=1,2$. By (i), $G$ is a $2$-Engel group, hence Lemma \ref{lem:cocyclew2} shows that $[\omega_2]\in H^2(E(3,G);\Gamma^3(G))$ is defined. Since $G$ is a 2-Engel group, every triple $(x,y,z)\in G^3$ is affinely nil-$2$, which shows that for all $x,y,z\in G$ there exists a homology cycle
\[
c(x,y,z):=(x,y,z)-(1,y,z)+(1,x,z)-(1,x,y) \in C_2(E(3,G))\, .
\]
Since $H_2(E(3,G);\Z/e)=0$, the homomorphism $-\cap \omega_2\co H_2(E(3,G);\Z/e)\to \Gamma^3(G)$ is trivial, hence
\[
0=c(x,y,z)\cap \omega_2=[x,y,z]
\]
for all $x,y,z\in G$, i.e., $\cl(G)\leq 2$. Conversely, if $\cl(G)\leq 2$, then $E(3,G)=EG$ which is contractible.
\end{proof}

\begin{remark} \label{rem:centrality}
It would be interesting to characterize the groups which satisfy the condition of the theorem. Note that if $\cl(G)\leq 3$ and $x_1,y_1,\dots,x_k,y_k\in G$ are such that $[x_i,y_i]^3\in Z(G)$ for all $i=1,\dots,k$, then $\prod_{i=1}^k [x_i,y_i]^3\in Z(G)$. Because of this and because $[G,G]$ is abelian, the centrality condition in the theorem is equivalent to the condition that $[x,y]^3\in Z(G)$ for all $x,y\in G$ with $\cl(\langle x,y\rangle)\leq 2$.
\end{remark}

\begin{proof}[Proof of Theorem \ref{thm:main3groupsIntro}]
Let $G$ be a group with $\cl(G)\leq 3$ and $\exp(\Gamma^3(G))\mid 3$. Then for every $x,y,z\in G$ we have $1=[x,y,z]^3=[[x,y]^3,z]$, hence $[x,y]^3\in Z(G)$. Therefore, $G$ satisfies the assumption of Theorem \ref{thm:main3groups} (see Remark \ref{rem:centrality}). Moreover, the proof of Theorem \ref{thm:main3groups} shows that $H_1(E(3,G);\Z/e)=0$ if and only if $\pi_1(E(3,G))=0$, and $H_i(E(3,G);\Z/e)=0$ for $i=1,2$ if and only if $E(3,G)$ is $2$-connected. Thus, Theorem \ref{thm:main3groupsIntro} is a consequence of Theorem \ref{thm:main3groups}, noting that $E(3,G)\simeq \mathscr{C}(\mathcal{N}_3,G)$ by Proposition \ref{prop:eqgvscosetposet}.
\end{proof}

However, there are groups $G$ to which Theorem \ref{thm:main3groups} applies and which have $\exp(\Gamma^3(G))>3$. We demonstrate this in the next two examples.

\begin{example} Let $\He$ denote the Heisenberg group over $\Z/9$. Then any central extension of $\He$ satisfies the assumptions of Theorem \ref{thm:main3groups}.
\end{example}
\begin{proof}
Let $K \xrightarrow{i} G \xrightarrow{p} \He$ be a central extension. We may assume that $\cl(G)=3$. Let $x,y\in G$ and suppose that $\cl(\langle x,y\rangle)\leq 2$. We will show that $[x,y]^3\in Z(G)$ by showing that $[p(x),p(y)]^3=1$, and therefore $[x,y]^3\in \im(i)\leq Z(G)$. It suffices to show that $[p(x),p(y)]$ does not generate the center $Z(\He)\cong C_9$, for then $[p(x),p(y)]=c^3$ for some $c\in Z(\He)$, hence $[p(x),p(y)]^3=1$.

Now suppose for contradiction that $[p(x),p(y)]$ generates $Z(\He)$. Since $\He$ is 2-generated, the Frattini quotient $\He/\Phi(\He)$ is elementary abelian of rank $2$. The images of $p(x)$ and $p(y)$ in $\He/\Phi(\He)$ must be linearly independent. If this were not the case, then $p(y)=p(x)^d$ mod $\Phi(\He)$ for some $d\in \Z$, hence $[p(x),p(y)]\in \He^3$, and $[p(x),p(y)]$ would not generate $Z(\He)$. Therefore, $p(x)$ and $p(y)$ map to a basis of $\He/\Phi(\He)$, whence $\langle p(x),p(y)\rangle=\He$. But then $G=p^{-1}(\langle p(x),p(y)\rangle)=\langle x,y\rangle K$, so $\cl(\langle x,y\rangle)=\cl(G)=3$, contradicting the assumption that $\cl(\langle x,y\rangle)\leq 2$.
\end{proof}

\begin{example} \label{ex:centralextensionHe9}
The central extension of $\He$ by the cyclic group $C_9$ with presentation
\[
G=\langle x,y,z\mid x^9,\, y^9,\, [x,z],\, [y,z],\, [x,y,y]z^{-1},\, [y,x,x]z^{-1}\rangle
\]
has $\exp(\Gamma^3(G))=9$.
\end{example}

Recall that \cite[Conjecture 2.1]{Ok15} asserted that $\pi_1(\mathscr{C}(\mathcal{N}_3,G))=0$ if and only if $\cl(G)\leq 2$. While not true in general, this assertion holds when $G$ is a two generator $p$-group of class at most $3$. Proving this will be our goal for the remainder of this section (see Theorem \ref{thm:2genp-grcl=3}).

If $N\vartriangleleft G$ is a normal subgroup and $g\in G$ and $H\in \mathcal{F}$ we write $\bar{g}$ and $\bar{H}$ for the image of $g$ respectively $H$ under the quotient map $G\to G/N$. We also write $\bar{\mathcal{F}}$ for the collection $\{\bar{H}\mid H\in \mathcal{F}\}$ of subgroups of $G/N$.

\begin{lemma} \label{lem:quotientcosetposet}
Let $N\vartriangleleft G$ be a normal subgroup and suppose that for every $H\leq G$ with $\cl(H)\leq 2$ we have that $\cl(NH)\leq 2$. Then the map
\[
p\co \mathscr{C}(\mathcal{N}_{3},G )\to \mathscr{C}(\bar{\mathcal{N}}_{3},G/N)
\]
sending $gH\mapsto \bar{g}\bar{H}$ is a homotopy equivalence.
\end{lemma}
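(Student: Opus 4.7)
My plan is to realize $p$ as being induced by a closure operator on $P := \mathscr{C}(\mathcal{N}_3, G)$, and then to invoke the standard fact that a closure operator realizes as a deformation retraction onto its image. First I would define a poset endomap $r \co P \to P$ by $r(C) := NC$; concretely, $r(gH) = gNH$. This is manifestly well-defined on cosets (since it is defined directly on $C$ as a subset of $G$), and $NH$ is a subgroup of $G$ because $N$ is normal. The hypothesis $\cl(NH) \leq 2$ is exactly what is needed to ensure that $r(gH) \in P$, and it is essentially the only place the assumption enters.

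Next I would verify that $r$ is a closure operator. Extensivity $C \subseteq NC$ is immediate from $1 \in N$; idempotence follows from $N(NC) = NC$; and monotonicity $C \subseteq C' \Rightarrow NC \subseteq NC'$ is formal at the level of subsets of $G$. The pointwise inequality $\textnormal{id}_P \leq r$ gives a natural transformation of poset endofunctors, and passing to geometric realizations yields a homotopy between $\textnormal{id}_{|P|}$ and $|i \circ r|$, where $i \co \im(r) \hookrightarrow P$ is the inclusion. Since $r \circ i = \textnormal{id}_{\im(r)}$, this exhibits $\im(r)$ as a deformation retract of $P$.

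Finally, I would identify $\im(r)$ with $\mathscr{C}(\bar{\mathcal{N}}_3, G/N)$ as posets. The assignment $gNH \mapsto \bar{g}\bar{H}$ is an order-preserving bijection whose inverse sends $\bar{g}\bar{H}$ to its preimage $\pi^{-1}(\bar{g}\bar{H}) = gNH$ under the quotient map $\pi \co G \to G/N$. Under this identification, $r$ corresponds to $p$, so $p$ is a homotopy equivalence. I do not anticipate any serious obstacle: once the closure-operator picture is in place, everything is formal, and the content of the hypothesis is absorbed entirely into the well-definedness of $r$ as a map $P \to P$.
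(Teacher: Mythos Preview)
Your argument is correct. The closure operator $r(C)=NC$ is well-defined by the hypothesis, the inequalities $\textnormal{id}_P\leq r$ and $r\circ i=\textnormal{id}_{\im(r)}$ give the deformation retraction, and the identification of $\im(r)$ with $\mathscr{C}(\bar{\mathcal{N}}_3,G/N)$ via $gNH\leftrightarrow \pi^{-1}(\bar g\bar H)$ is an order isomorphism.

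The paper proceeds differently: it applies Quillen's fiber lemma to the poset map $p$, observing that each undercategory $p/\bar g\bar H$ has $gNH$ as a maximal element and is therefore contractible. Both arguments hinge on the same object $gNH$ --- for you it is the value of the closure operator, for the paper it is the cone point of the fiber --- and the hypothesis on $\cl(NH)$ is used in exactly the same way, to guarantee that $gNH$ actually lies in the coset poset. Your route is more self-contained, building the homotopy explicitly rather than importing Quillen's Theorem~A; the paper's route is terser once one is willing to quote that result. Neither approach gains or loses generality here, and in fact your closure-operator argument is essentially the proof of Quillen's lemma specialized to this situation.
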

\begin{proof}
This follows from Quillen's fiber lemma \cite[Proposition 1.6]{Quillen78}, because for each $\bar{g}\bar{H}\in \mathscr{C}(\bar{\mathcal{N}}_{3},G/N)$ the fiber $p/\bar{g}\bar{H}:=\{g'H'\in \mathscr{C}(\mathcal{N}_{3},G)\mid p(g'H')\leq \bar{g}\bar{H}\}$ has $gNH$ as maximal element, and thus is contractible.
\end{proof}

\begin{theorem}\label{thm:2genp-grcl=3}
Let $G$ be a nilpotent group of class $\cl(G)\leq 3$ and let $e\in \N\cup \{0\}$ be the exponent of $\Gamma^3(G)$. Suppose that either
\begin{enumerate}
\item $G$ is a two generator $p$-group for some prime $p$, or
\item for every subgroup $H\leq G$ with $\cl(H)\leq 2$ we have that $[H,H]\leq Z(G)$.
\end{enumerate}
Then $H_1(E(3,G);\Z/e)=0$ if and only if $\cl(G)\leq 2$.
\end{theorem}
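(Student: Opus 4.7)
First dispose of the backward implication: if $\cl(G)\leq 2$ then every tuple in $G^{k+1}$ is affinely nil-$2$, so $E(3,G)=EG$ is contractible and $H_1$ vanishes in every coefficient.

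Under hypothesis (2) the plan is to apply Theorem \ref{thm:main3groups}(i) directly. The condition $[H,H]\leq Z(G)$ is strictly stronger than the centrality condition $[H,H]^3\leq Z(G)$ required there, so $H_1(E(3,G);\Z/e)=0$ forces $G$ to be $2$-Engel. Since every pair $x,y\in G$ then generates a subgroup of class $\leq 2$, hypothesis (2) gives $[x,y]\in[\langle x,y\rangle,\langle x,y\rangle]\leq Z(G)$, whence $\cl(G)\leq 2$.

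Under hypothesis (1) the centrality condition may fail, so the plan is to work modulo the obstruction. Set
\[
K:=\langle\,[u,v,z]^3 : u,v,z\in G,\ \cl(\langle u,v\rangle)\leq 2\,\rangle\leq \Gamma^3(G).
\]
A reinspection of the last step in the proof of Lemma \ref{lem:cocyclew1} shows that the cocycle obstruction for $\omega_1$ at an affinely nil-$2$ triple $(x,y,z)$ is precisely $[z^{-1}x,z^{-1}y,z]^3$, which lies in $K$; so $\omega_1$ descends to a cocycle with coefficients in $\Gamma^3(G)/K$. With $e_0:=\exp(\Gamma^3(G)/K)$, which divides $e$, the hypothesis $H_1(E(3,G);\Z/e)=0$ implies $H_1(E(3,G);\Z/e_0)=0$ by universal coefficients, so the cap-product pairing with $\omega_1$ vanishes on $H_1$. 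Evaluated on the cycle $c(x,y):=(x,y)+(y,1)+(1,x)$, this will give $[x,y,xy]\in K$ for every $x,y\in G$.

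The 2-generator assumption enters twice. First, trilinearity of $[\cdot,\cdot,\cdot]$ on $G/\Gamma^2(G)$ with $x\equiv a^ib^j$, $y\equiv a^kb^l$ yields $[x,y,xy]=(il-jk)((i+k)[a,b,a]+(j+l)[a,b,b])$; varying $(i,j,k,l)$ shows these values generate $\Gamma^3(G)=\langle[a,b,a],[a,b,b]\rangle$, so $\Gamma^3(G)\subseteq K$. Second, the map $\phi\co\Z^2\to\Gamma^3(G)$, $(i,j)\mapsto i[a,b,a]+j[a,b,b]$, is surjective (by 2-generation), so $[\Z^2:\ker\phi]=|\Gamma^3(G)|$. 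If the ``determinant'' $\Delta:=il-jk$ of a generator of $K$ were coprime to $p$, then invertibility of $\Delta$ on $\Gamma^3(G)$ together with the class-$\leq 2$ constraints $\Delta\cdot\phi(i,j)=\Delta\cdot\phi(k,l)=0$ would put $(i,j),(k,l)\in\ker\phi$; but the determinant of a matrix with rows in $\ker\phi$ is divisible by $[\Z^2:\ker\phi]=|\Gamma^3(G)|$, contradicting $\gcd(\Delta,|\Gamma^3(G)|)=1$ unless $\Gamma^3(G)=0$. Hence every generator of $K$ has $p\mid\Delta$, so $K\subseteq p\Gamma^3(G)$. The chain $\Gamma^3(G)\subseteq K\subseteq p\Gamma^3(G)$ combined with Nakayama's lemma for the finite $p$-group $\Gamma^3(G)$ forces $\Gamma^3(G)=0$, i.e., $\cl(G)\leq 2$. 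The main hurdle I foresee is this last lattice-determinant step, where the 2-generator hypothesis is essentially used via the surjectivity of $\phi$.
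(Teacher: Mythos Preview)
Your treatment of the backward implication and of case~(2) matches the paper's exactly.

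For case~(1) your argument is correct, and the lattice-determinant step you flag as the main hurdle goes through cleanly: for any choice of integer lifts $(i,j),(k,l)$ of $u,v$ modulo $\Gamma^2(G)$, either $p\mid\Delta$ (whence $[u,v,z]^3=3\Delta[a,b,z]\in p\Gamma^3(G)$ directly) or $\Delta$ acts invertibly on the $p$-group $\Gamma^3(G)$, forcing $(i,j),(k,l)\in\ker\phi$ and hence $|\Gamma^3(G)|\mid\Delta$, which is impossible if $\Gamma^3(G)\neq 0$. The chain $\Gamma^3(G)\subseteq K\subseteq p\Gamma^3(G)$ then collapses $\Gamma^3(G)$ as you say.

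The paper's route for case~(1) is quite different. It passes to $\bar{G}=G/\Gamma^3(G)^p$, so that $\exp(\Gamma^3(\bar{G}))=p$; surjectivity of $E(3,G)\to E(3,\bar{G})$ on $1$-skeleta (both are complete graphs) plus Hurewicz transports the vanishing of $H_1$ to $\bar{G}$. Then a structural result from \cite{LQZ17} says that in a $2$-generator $p$-group of class~$3$ with $\exp(\Gamma^3)=p$ every proper subgroup has class $\leq 2$, so Lemma~\ref{lem:quotientcosetposet} with $N=\Phi(\bar{G})$ identifies $\mathscr{C}(\mathcal{N}_3,\bar{G})$ with the coset poset of all proper subspaces of $(\F_p)^2$; this is a $1$-complex of Euler characteristic $(p^2-1)(p-1)+1>1$, contradicting $H_1=0$. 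Your approach stays entirely within the obstruction framework of Section~\ref{sec:connectivity} and is self-contained (no appeal to \cite{LQZ17}); the paper's is more geometric and makes the obstruction visible as a concrete coset complex, at the cost of an external citation.
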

\begin{proof}
Clearly, if $\cl(G)\leq 2$, then $H_1(E(3,G);\Z/e)=0$ because $E(3,G)$ is contractible. To prove the converse assume that $H_1(E(3,G);\Z/e)=0$.

Let us first consider the case when $G$ is a two generator $p$-group. Assume for contradiction that $\cl(G)=3$ and let $p^k$, where $k>0$, be the exponent of $\Gamma^3(G)$. Define $\bar{G}:=G/\Gamma^3(G)^p$ and consider the map $f\co E(3,G) \to E(3,\bar{G})$ induced by the quotient map $G\to \bar{G}$. Then $f$ is surjective on fundamental groups. To see this note that the simplicial $1$-skeleta of $E(3,G)$ and $E(3,\bar{G})$ are the complete graphs on the sets $G$ respectively $\bar{G}$, and $f$ is surjective on vertices as well as edges. It follows now from the Hurewicz theorem that $H_1(E(3,\bar{G});\Z/e)=0$. 

Now $\bar{G}$ is two generated of class three with $\exp(\Gamma^3(\bar{G}))=p$. Thus, according to \cite[~Theorem~3.7]{LQZ17} every proper subgroup of $\bar{G}$ is of class at most two. Next we apply Lemma \ref{lem:quotientcosetposet} to $\bar{G}$ with $N=\bar{G}^p[\bar{G},\bar{G}]$, the Frattini subgroup of $\bar{G}$. We have that $\bar{G}/N\cong (\Z/p)^2$ and that $\bar{\mathcal{N}}_{3}$ agrees with the poset of all proper subgroups of $\bar{G}/N$. Thus $\mathscr{C}(\mathcal{N}_3,\bar{G})$ is homotopy equivalent to the realization of the coset poset of all proper subspaces of $(\Z/p)^2$. This is a $1$-dimensional complex, and a brief computation shows that its Euler characteristic equals $(p^2-1)(p-1)+1$. In particular, $H_1(\mathscr{C}(\mathcal{N}_3,\bar{G});\Z/e)\neq 0$, and since $E(3,\bar{G})\simeq \mathscr{C}(\mathcal{N}_3,\bar{G})$ we have a contradiction.

Finally, assume that $G$ satisfies (2). By Theorem \ref{thm:main3groups}, $G$ is a $2$-Engel group. Thus for all $x,y\in G$ we have that $\cl(\langle x,y\rangle)\leq 2$ and hence $[x,y]\in Z(G)$ by (2). It follows that $\cl(G)\leq 2$.
\end{proof}

\begin{example}
Let $U_4(\F_2)\leq GL_4(\F_2)$ denote the subgroup of upper triangular matrices with ones on the diagonal. Then $\cl(U_4(\F_2))=3$, but $E(3,U_4(\mathbb{F}_2))$ is simply-connected (see Theorem \ref{thm:e3u42}). However, this does not conflict with Theorem \ref{thm:2genp-grcl=3}, because $U_4(\F_2)$ is minimally generated by three elements, and it contains a subgroup isomorphic to the Heisenberg group over $\F_2$ whose derived subgroup is not in the center of $U_4(\F_2)$.
\end{example}

\begin{remark}
Similarly to Theorem \ref{thm:main3groups}, the condition $H_1(E(3,G);\Z/e)=0$ in Theorem \ref{thm:2genp-grcl=3} is equivalent to the \emph{a priori} stronger condition that $\pi_1(E(3,G))=0$.
\end{remark}

\begin{remark}
Let $G$ be a $p$-group all of whose proper subgroups are nilpotent of class at most $2$. Then $G$ is of class at most $3$ (see \cite[\S 4 Corollary 1]{Macd1}), and $E(3,G)$ is homotopy equivalent to the coset poset of all proper subgroups of $G$. By a theorem of Brown \cite[Proposition 11]{Br00}, $E(3,G)$ has the homotopy type of a wedge of spheres. The dimension of the spheres could be computed from a chief series for $G$, but if $G$ is two generated, then Theorem \ref{thm:2genp-grcl=3} readily implies that $E(3,G)$ has the homotopy type of a bouquet of circles. In particular, $B(3,G)$ is a $K(\pi,1)$ in this situation (cf. Corollary \ref{cor:kpi1}).
\end{remark}

\section{Homotopy types} \label{sec:homotopytypes}

A natural counterexample to \cite[Conjecture 2.1]{Ok15} is a $2$-Engel group of nilpotency class $3$, like the Burnside group $B(3,3)$ of rank and exponent  $3$. One of the smallest counterexamples is the group $U_4(\F_2)$ of upper unitriangular matrices over $\F_2$, which is not a $2$-Engel group. In this section we take these two examples as a motivation to determine the full homotopy type of the coset poset $\mathscr{C}(\mathcal{N}_3,G)$ when $G$ is the upper unitriangular group $U_4(\F_p)$ for any prime $p$, or the Burnside group $B(r,3)$ of exponent $3$ on any number of $r\geq 3$ generators.

\subsection{Upper unitriangular matrices} \label{sec:unipotent}

Let $U_4(\F_p)\leq GL_4(\F_p)$ be the subgroup of upper triangular matrices with ones on the diagonal. This is a nilpotent group of class $3$ (see \cite[p.~127]{Ro96}).

\begin{theorem} \label{thm:e3u42}
Let $p$ be a prime.
\begin{enumerate}
\item If $p=2$, then there is a homotopy equivalence
\[
\mathscr{C}(\mathcal{N}_3,U_4(\F_p))\simeq \bigvee^3 S^2\, .
\]
\item If $p>2$, then there is a homotopy equivalence
\[
\mathscr{C}(\mathcal{N}_3,U_4(\F_p))\simeq \bigvee^{p^2(p-1)^3}S^1\vee \bigvee^{(p-1)^3}S^2\,.
\]
\end{enumerate}
\end{theorem}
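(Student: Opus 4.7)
The plan is to reduce $\mathscr{C}(\mathcal{N}_3, U_4(\F_p))$ to a tractable combinatorial complex via Lemma \ref{lem:quotientcosetposet}, and then determine its homotopy type by combining an Euler characteristic count with a van Kampen computation on the 2-skeleton.

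First I would classify the class-$\leq 2$ subgroups of $G := U_4(\F_p)$. Writing $Z := Z(G) = \langle E_{14}\rangle \cong \F_p$, $[G,G] = \langle E_{13},E_{24},E_{14}\rangle$ of order $p^3$, and $\pi\co G \to G/[G,G]\cong \F_p^3$ for the Frattini projection with basis $\bar E_{12},\bar E_{23},\bar E_{34}$, standard commutator identities give an explicit trilinear form
\[
T(\alpha,\beta,\gamma) = (\alpha_1\beta_2 - \alpha_2\beta_1)\gamma_3 - (\alpha_2\beta_3 - \alpha_3\beta_2)\gamma_1
\]
on $\F_p^3$ for which $[[a,b],c] = E_{14}^{T(\pi(a),\pi(b),\pi(c))}$. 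Hence $\cl(H)\leq 2$ precisely when $T$ vanishes on $\pi(H)^{\times 3}$. A case analysis of $2$-dimensional subspaces $V\leq \F_p^3$ shows that for $p$ odd the only good $V$ are the three coordinate planes, while in characteristic $2$ the form $T$ is more degenerate and one extra good plane $\{x = z\}$ appears. The maximal class-$\leq 2$ subgroups of $G$ are therefore the preimages $\pi^{-1}(V)$ of these good planes, each of order $p^5$.

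Next I would apply Lemma \ref{lem:quotientcosetposet} with $N = Z$: centrality of $Z$ forces $\cl(ZH) = \cl(H)$ for every $H$, so the hypothesis holds and one obtains a homotopy equivalence $\mathscr{C}(\mathcal{N}_3,G)\simeq \mathscr{C}(\overline{\mathcal{N}_3},G/Z)$ in the class-$2$ quotient of order $p^5$. Since the pairwise intersections of good $2$-planes are $1$-dimensional (and preimages of $1$-dimensional subspaces are automatically class-$\leq 2$), the image family $\overline{\mathcal{N}_3}$ is closed under intersections, and Proposition \ref{prop:eqgvscosetposet} identifies the reduced complex with the nerve of the cover of $G/Z$ by the cosets of the maximal elements of $\overline{\mathcal{N}_3}$.

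The final step is the cell count and homotopy computation. A careful enumeration of the cosets of the three (resp.\ four) maximal subgroups and their intersection pattern yields the Euler characteristic $\chi = 1 - p^2(p-1)^3 + (p-1)^3$ for $p$ odd and $\chi = 4$ for $p = 2$. The complex collapses onto a $2$-dimensional model (via a discrete Morse matching on chains in the coset poset in which consecutive inclusions fix an ambient maximal coset), so its homotopy type is determined by $\pi_1$ and $H_2$. Van Kampen on the $2$-skeleton then gives $\pi_1$: for $p$ odd a free group of rank $p^2(p-1)^3$, whereupon the Euler characteristic forces the remaining $(p-1)^3$ to appear as $2$-spheres; for $p=2$ the fourth maximal subgroup provides enough additional $2$-cells to trivialise every $1$-cycle, so $\pi_1 = 1$ and $\chi = 4$ yields $\bigvee^{3} S^2$.

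The main obstacle will be the detailed combinatorial bookkeeping in this last step: enumerating the cosets of each maximal class-$\leq 2$ subgroup, tracking the intersection lattice through the nerve, and verifying that the van Kampen presentation reduces to a free group of exactly the claimed rank $p^2(p-1)^3$ (not merely one of the correct Euler-characteristic rank). A secondary subtlety is the $p=2$ case, where the extra maximal subgroup $\pi^{-1}(\{x=z\})$ must be woven into the bookkeeping throughout and is ultimately responsible for the qualitatively different (simply-connected) answer.
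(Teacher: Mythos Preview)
Your classification of the trilinear form and the good planes is exactly what the paper does (Lemmas~\ref{lem:commutatorformula} and~\ref{lem:isotrsubV}), and your Euler characteristics are correct. However, the paper takes a cleaner route in two respects, and the places where you diverge are precisely where your sketch becomes thin.

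First, the paper applies Lemma~\ref{lem:quotientcosetposet} with $N=[G,G]$ rather than $N=Z(G)$. The hypothesis still holds: if $\cl(H)\leq 2$ then $\Gamma^3([G,G]\,H)$ is generated by brackets $[[G,G],[G,G],-]$, $[[G,G],H,-]$, and $[H,H,H]$, all of which vanish because $\cl(G)=3$ and $\cl(H)\leq 2$. This reduces directly to the coset poset $\mathscr{C}(\mathcal{I},V)$ of isotropic subspaces in $V=\F_p^{\,3}$, which is already $2$-dimensional; no discrete Morse collapse is needed. Your reduction to $G/Z$ (order $p^5$, class $2$) leaves you with a much larger poset whose chains have length up to $4$, so the ``collapse onto a $2$-dimensional model'' you invoke is doing real work that you have not specified.

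Second, and more significantly, the paper avoids van Kampen entirely. Fixing one good plane $W$, a join decomposition due to Br\"uck (recorded here as Lemma~\ref{lem:isotr&compl}, relying on the notion of a normal subgroup \emph{dividing} a family) gives
\[
\mathscr{C}(\mathcal{I},V)\;\simeq\;\mathscr{C}(\mathcal{I}^W,V)\ast\mathscr{C}(\{0\},V/W)\;\simeq\;\bigvee^{p-1}\Sigma\,\mathscr{C}(\mathcal{I}^W,V),
\]
where $\mathscr{C}(\mathcal{I}^W,V)$ is a $1$-dimensional complex. For $p=2$ one checks it is connected with $\chi=-2$, hence a wedge of three circles, and suspension gives $\bigvee^3 S^2$; for $p$ odd one finds $p^2(p-1)^2$ isolated points together with one connected graph component contributing $(p-1)^2$ circles, and the suspension formula multiplies everything by $(p-1)$. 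This accounts for both the simple connectivity at $p=2$ and the exact rank $p^2(p-1)^3$ of $\pi_1$ at odd primes \emph{without} any presentation chasing. By contrast, your van Kampen step is only asserted: you would need to exhibit a cover, compute all the edge-group images, and simplify the resulting presentation to a free group of the precise rank --- feasible, but substantially harder than the one-line suspension argument, and the point in your plan most at risk of error.
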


The rest of this subsection is devoted to the proof of Theorem \ref{thm:e3u42}. Our strategy will be to use Lemma \ref{lem:quotientcosetposet} to reduce the study of the homotopy type to a coset poset which is easier to analyze.

To keep the notation simple let us denote $G=U_4(\F_p)$. The group $G$ is generated by $a=I+E_{12}$, $b=I+E_{23}$, and $c=I+E_{34}$, where $E_{ij}$ is the elementary matrix with $1$ in the $(i,j)$-th spot and zeroes everywhere else. The commutator group $[G,G]$ is the subgroup of all those matrices with vanishing first super diagonal. In the first super diagonal multiplication in $G$ corresponds to addition. Hence,
\[
V:=G/[G,G]\cong (\F_p)^3\,,
\] 
and the quotient map $\pi\co G\to V$ sends the generators $\{a,b,c\}$ to the standard basis of $(\F_p)^3$.
Since $\cl(G) = 3$, we can apply Lemma \ref{lem:quotientcosetposet} with $N=[G,G]$. This yields a homotopy equivalence
\[
\mathscr{C}(\mathcal{N}_3,G)\xrightarrow{\simeq} \mathscr{C}(\mathcal{I},V)\,,
\]
where $\mathcal{I}:=\{U\leq V\mid \cl(\pi^{-1}(U))\leq 2\}$. To prove the theorem we will analyse the homotopy type of $\mathscr{C}(\mathcal{I},V)$.

In a first step we describe the family $\mathcal{I}$. The group $\Gamma^3(G)$ is isomorphic to $\F_p$ and generated by $I+E_{14}$. The $3$-fold commutator $[-,-,-]\co G^3\to \Gamma^3(G)\cong \F_p$ factors through a trilinear form $\beta\colon V^3\to \F_p$. Clearly,
\[
U \in \mathcal{I}\; \Longleftrightarrow\; \beta|_{U^3}\equiv 0\,.
\]

To analyze $\mathcal{I}$ further we derive an explicit formula for $\beta$. For $x\in G$ we write $\bar{x}:=\pi(x)\in V$. Pick the standard basis $\{e_1, e_2, e_3\}$ for $V$. If $x$ is displayed as a matrix with coordinates $(x_{ij})$, then $\bar{x}$ is the column vector $(x_{12},x_{23},x_{34})^T\in V$ with respect to the standard basis. Let $\varphi \co V\to V$ be the linear map defined by $\varphi(e_1)=-e_1$, $\varphi(e_2)=0$ and $\varphi(e_3)=e_3$. 

\begin{lemma}\label{lem:commutatorformula}
For $\bar{x},\bar{y},\bar{z}\in V$ we have $\beta(\bar{x},\bar{y},\bar{z}) = \det(\bar{x}\mid \bar{y}\mid \varphi(\bar{z}))$.
\end{lemma}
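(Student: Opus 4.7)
The plan is to exploit that both sides are $\F_p$-trilinear, so it suffices to compare values on the $27$ basis triples $(e_i,e_j,e_k)$. Moreover, since $[x,y]=[y,x]^{-1}$ and the commutator map is bilinear with values in the central abelian group $\Gamma^3(G)$ (written additively), the form $\beta$ is alternating in its first two arguments. The right-hand side is manifestly alternating in its first two slots as well, so we need only check the three pairs $(i,j)\in\{(1,2),(1,3),(2,3)\}$ against each $k\in\{1,2,3\}$, nine cases in total.

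The calculations all reduce to the matrix identity $E_{ij}E_{k\ell}=\delta_{jk}E_{i\ell}$. First I would dispatch the pair $(e_1,e_3)$: since $a=I+E_{12}$ and $c=I+E_{34}$ commute (the indices do not chain), $[a,c]=1$, so $\beta(e_1,e_3,e_k)=0$ for every $k$. On the right, $\varphi(e_k)\in\mathrm{span}(e_1,e_3)$ for all $k$ (because $\varphi(e_1)=-e_1$, $\varphi(e_2)=0$, $\varphi(e_3)=e_3$), so the column $\varphi(e_k)$ is always dependent on $e_1,e_3$, and $\det(e_1\mid e_3\mid\varphi(e_k))=0$.

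Next, a short direct computation yields $[a,b]=I+E_{13}$ and $[b,c]=I+E_{24}$. For the pair $(e_1,e_2)$ I would compute $[I+E_{13},I+E_{12}]$, $[I+E_{13},I+E_{23}]$, $[I+E_{13},I+E_{34}]$. The first two give $I$ because the relevant indices do not chain, while the last gives $I+E_{14}$ (from $E_{13}E_{34}=E_{14}$). Under the identification $\Gamma^3(G)\cong\F_p$ with $I+E_{14}\leftrightarrow 1$, this shows $\beta(e_1,e_2,e_k)=\delta_{k,3}$. On the determinant side, $\det(e_1\mid e_2\mid\varphi(e_k))$ equals the third coordinate of $\varphi(e_k)$, which is $1$ precisely when $k=3$.

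Finally, for the pair $(e_2,e_3)$ I would compute $[I+E_{24},I+E_{23}]=[I+E_{24},I+E_{34}]=I$ and $[I+E_{24},I+E_{12}]=I-E_{14}$; the last uses that $E_{12}E_{24}=E_{14}$ while $E_{24}E_{12}=0$, which is exactly the asymmetry that produces the minus sign. Hence $\beta(e_2,e_3,e_k)=-\delta_{k,1}$. On the determinant side, $\det(e_2\mid e_3\mid\varphi(e_k))$ equals the first coordinate of $\varphi(e_k)$, which is $-1$ when $k=1$ (as $\varphi(e_1)=-e_1$) and $0$ otherwise. The two sides agree in every case, so the formula holds. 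The only subtle point is the last commutator: it is precisely the asymmetry between $E_{12}E_{24}$ and $E_{24}E_{12}$ that dictates why the map $\varphi$ must send $e_1$ to $-e_1$ (and not $+e_1$), and explains why $e_2$ lies in the kernel of $\varphi$.
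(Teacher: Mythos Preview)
Your proof is correct. The paper takes a different, more direct route: it computes the triple commutator for \emph{arbitrary} $x,y,z\in U_4(\F_p)$ in one stroke, obtaining the closed formula
\[
[x,y,z]=y_{23}(x_{12}z_{34}+x_{34}z_{12}) - x_{23}(y_{12}z_{34}+y_{34}z_{12}),
\]
and then identifies this expression with $\det(\bar{x}\mid\bar{y}\mid\varphi(\bar{z}))$ by expanding the determinant along the third column. Your approach instead exploits trilinearity (and antisymmetry in the first two slots, which follows from $[x,x]=1$) to reduce to nine elementary commutator computations on the generators $a,b,c$. The paper's method is shorter to state but hides a nontrivial matrix computation behind ``one can verify''; your basis-by-basis check is longer but each step is a one-line application of $E_{ij}E_{k\ell}=\delta_{jk}E_{i\ell}$, making the origin of the sign in $\varphi(e_1)=-e_1$ and the vanishing $\varphi(e_2)=0$ completely transparent. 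One minor point: your justification that $\beta$ is ``alternating'' in the first two slots really uses $[x,x]=1$ (giving vanishing on the diagonal), not just $[x,y]=[y,x]^{-1}$ (which only gives antisymmetry); this distinction matters when $p=2$, though in the end both hold and your reduction to the three off-diagonal pairs is valid.
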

\begin{proof}
Given $x,y,z\in U_4(\F_p)$ one can verify that
\[
[x,y,z]=y_{23}(x_{12}z_{34}+x_{34}z_{12}) - x_{23}(y_{12}z_{34}+y_{34}z_{12})=\det(\bar{x}\mid \bar{y}\mid \varphi(\bar{z}))\,. \qedhere
\]
\end{proof}

We conclude that $U\in \mathcal{I}$ if and only if for all $\bar{x},\bar{y},\bar{z}\in U$ the vectors $\bar{x},\bar{y},\varphi(\bar{z})$ are linearly dependent.

\begin{lemma}\label{lem:isotrsubV}
$\mathcal{I}$ contains all subspaces of $V$ of dimension $\leq 1$. Moreover,
\begin{enumerate}
\item if $p=2$, then the $2$-dimensional subspaces in $\mathcal{I}$ are
\[
\langle e_1,e_2\rangle\,,\; \langle e_1,e_3\rangle\,,\; \langle e_2,e_3\rangle\,,\; \langle e_1+e_3,e_2\rangle\,,
\]
\item if $p$ is odd, then the $2$-dimensional subspaces in $\mathcal{I}$ are
\[
\langle e_1,e_2\rangle\,,\; \langle e_1,e_3\rangle\,,\; \langle e_2,e_3\rangle\,.
\]
\end{enumerate}
\end{lemma}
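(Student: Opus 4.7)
The plan is to reduce the problem to classifying $\varphi$-invariant subspaces of $V$, and then carry out a small case analysis distinguishing the prime $2$ from odd primes.

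First I would dispose of the $1$-dimensional case trivially: if $\dim U \leq 1$, then any two vectors $\bar{x},\bar{y}\in U$ are linearly dependent, so already the first two columns of $(\bar{x}\mid\bar{y}\mid\varphi(\bar{z}))$ are dependent and the determinant vanishes.

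The main observation for the $2$-dimensional case is the equivalence
\[
U\in\mathcal{I}\;\Longleftrightarrow\;\varphi(U)\subseteq U.
\]
The direction ($\Leftarrow$) is immediate since all three of $\bar{x},\bar{y},\varphi(\bar{z})$ then lie in the $2$-dimensional space $U$. For ($\Rightarrow$), fix a basis $\{v_1,v_2\}$ of $U$; then $\det(v_1\mid v_2\mid \varphi(\bar z))=0$ for every $\bar z\in U$ forces $\varphi(\bar z)$ to lie in the span of $v_1,v_2$, i.e.\ in $U$. So the question reduces to listing the $\varphi$-invariant $2$-dimensional subspaces of $V$.

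For this I would diagonalize $\varphi$. By definition $\varphi$ has eigenvectors $e_1,e_2,e_3$ with eigenvalues $-1,0,1$ respectively. When $p$ is odd these three eigenvalues are pairwise distinct, so $\varphi$ is diagonalizable with three distinct one-dimensional eigenspaces, and every $\varphi$-invariant subspace is a direct sum of eigenspaces. The $\varphi$-invariant $2$-dimensional subspaces are therefore exactly the three coordinate planes $\langle e_1,e_2\rangle$, $\langle e_1,e_3\rangle$, $\langle e_2,e_3\rangle$, as claimed. When $p=2$ we have $-1=1$, so the eigenspace for eigenvalue $1$ is the $2$-plane $\langle e_1,e_3\rangle$ and that for eigenvalue $0$ is $\langle e_2\rangle$. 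A $2$-dimensional $\varphi$-invariant $U$ must split as a sum of subspaces of these two eigenspaces; the possibilities are the eigenspace $\langle e_1,e_3\rangle$ itself, and the $2$-planes of the form $\langle e_2\rangle\oplus L$ where $L$ is a line in $\langle e_1,e_3\rangle$. The lines in $\langle e_1,e_3\rangle$ over $\F_2$ are $\langle e_1\rangle$, $\langle e_3\rangle$ and $\langle e_1+e_3\rangle$, yielding exactly the four subspaces listed in (1).

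I do not foresee a real obstacle: the only non-routine step is recognising that the vanishing of $\beta$ on $U^3$ is equivalent to $\varphi$-invariance of $U$, and once that is phrased the classification is a linear-algebra exercise driven by the spectrum of $\varphi$. The characteristic~$2$ case requires only slightly more care because the eigenvalues collapse, but it is handled by the same eigenspace decomposition argument.
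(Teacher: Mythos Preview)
Your proposal is correct and follows exactly the route the paper takes: reduce membership of a $2$-plane $U$ in $\mathcal{I}$ to the condition $\varphi(U)\subseteq U$, then classify the $\varphi$-invariant $2$-planes. The paper merely writes ``basic linear algebra shows that $U\in\mathcal{I}$ if and only if $\varphi(U)\subseteq U$'' and ``the lists\dots\ are now easily obtained'', whereas you spell out both directions of the equivalence and the eigenspace analysis; but there is no difference in strategy.
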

\begin{proof}
It is clear that $\mathcal{I}$ contains all subspaces of dimension $\leq 1$. Let $U\leq V$ with $\dim(U)=2$. Using Lemma \ref{lem:commutatorformula} basic linear algebra shows that $U\in \mathcal{I}$ if and only if $\varphi(U)\subseteq U$. The lists of subspaces in (1) and (2) are now easily obtained.
\end{proof}

In the next lemma we will use a result from \cite{BB} to break $\mathscr{C}(\mathcal{I},V)$ down into simpler pieces. To recall the setup let $G$ be a group and $\mathcal{F}$ a family of proper subgroups of $G$. If $N\vartriangleleft G$ is a normal subgroup, then $\mathcal{F}$ can be written as a disjoint union $\mathcal{F}=\mathcal{F}_N\sqcup \mathcal{F}^N$, where
\[
\mathcal{F}_N=\{H\in \mathcal{F}\mid HN\neq G\}\quad \textnormal{and}\quad \mathcal{F}^N=\{H\in \mathcal{F}\mid HN=G\}\,.
\]
We say that $N$ \emph{divides} $\mathcal{F}$ if for all $H\in \mathcal{F}_N$ one has $HN\in \mathcal{F}$, and for all $H\in \mathcal{F}_N$ and $K\in \mathcal{F}^N$ one has $HN\cap K\in \mathcal{F}$. If $N$ divides $\mathcal{F}$ and $\mathcal{F}^N$ is non-empty, then according to \cite[Proposition 3.24]{BB} there is a homotopy equivalence
\begin{equation} \label{eq:benj}
\mathscr{C}(\mathcal{F},G)\simeq \mathscr{C}(\mathcal{F}^N,G) \ast \mathscr{C}(\overline{\mathcal{F}}_N,G/N)\, ,
\end{equation}
where $\overline{\mathcal{F}}_N$ denotes the image of the family $\mathcal{F}_N$ in $G/N$.

\begin{lemma}\label{lem:isotr&compl}
Let $W\leq V$ be a any fixed $2$-dimensional subspace in $\mathcal{I}$. Then for any fixed choice of basepoint for $\mathscr{C}(\mathcal{I}^W,V)$ there is an unbased homotopy equivalence
\[
\mathscr{C}(\mathcal{I},V) \simeq \bigvee^{p-1}\Sigma\mathscr{C}(\mathcal{I}^W,V)\,.
\]
\end{lemma}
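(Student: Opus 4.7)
The plan is to apply the join decomposition formula (\ref{eq:benj}) from \cite{BB} with normal subgroup $N = W$ (note that $V$ is abelian, so every subspace is normal) and family $\mathcal{F} = \mathcal{I}$, and then to identify each of the two join factors. First I would verify that $W$ divides $\mathcal{I}$. Since $W$ has codimension one in $V$, the condition $U + W = V$ is equivalent to $U \not\subseteq W$; therefore $\mathcal{I}_W = \{U \in \mathcal{I} \mid U \subseteq W\}$ and $\mathcal{I}^W = \{U \in \mathcal{I} \mid U \not\subseteq W\}$. The set $\mathcal{I}^W$ is nonempty, as any line spanned by a vector outside $W$ lies in $\mathcal{I}$ by Lemma \ref{lem:isotrsubV}. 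For $U \in \mathcal{I}_W$ one has $UW = W \in \mathcal{I}$ by hypothesis, and for $U \in \mathcal{I}_W$ together with $U' \in \mathcal{I}^W$ one has $UW \cap U' = W \cap U'$, which is a subspace of $U' \in \mathcal{I}$; the defining condition $\beta|_{U^3} \equiv 0$ of $\mathcal{I}$ is inherited by subspaces, so $W \cap U' \in \mathcal{I}$.

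Next I would identify $\overline{\mathcal{I}}_W$. Every $U \in \mathcal{I}_W$ is contained in $W$, so its image in $V/W$ is the zero subspace, and hence $\overline{\mathcal{I}}_W = \{0\}$. Since $W$ has codimension one, $V/W \cong \F_p$, so $\mathscr{C}(\{0\}, V/W)$ is simply the set of $p$ singletons in $V/W$, whose geometric realization is a discrete space with $p$ points. Formula (\ref{eq:benj}) then gives
\[
\mathscr{C}(\mathcal{I}, V) \simeq \mathscr{C}(\mathcal{I}^W, V) \ast \{p \text{ points}\}\,.
\]

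Finally, I would invoke the standard fact that for any space $X$ the join $X \ast \{p \text{ points}\}$ is homotopy equivalent to $\bigvee^{p-1} \Sigma X$: this join is a union of $p$ cones on $X$ sharing a common base $X$, and collapsing one of these contractible cones (via the cofibration $CX \hookrightarrow X \ast \{p \text{ points}\}$) also collapses the shared copy of $X$ to a point, leaving $p - 1$ copies of $\Sigma X$ meeting at a single point. This homotopy equivalence is unbased and independent of any choice of basepoint in $X$, which matches the formulation of the lemma. Applying it with $X = \mathscr{C}(\mathcal{I}^W, V)$ yields the desired conclusion. The only nontrivial step in this argument is the verification of the division condition; the remainder is a routine join computation with a finite discrete set.
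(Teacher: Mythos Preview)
Your proof is correct and follows essentially the same approach as the paper's: apply the join decomposition (\ref{eq:benj}) with $N=W$, identify $\overline{\mathcal{I}}_W=\{0\}$ so that the second factor is a discrete set of $p$ points, and then compute the join with a finite discrete set. You supply more detail than the paper does for the verification that $W$ divides $\mathcal{I}$ (the paper leaves this as ``readily checked''), and for the final step you use a direct cone-collapse argument in place of the paper's appeal to the identity $X\ast Y\simeq \Sigma(X\wedge Y)$; these are equivalent.
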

\begin{proof}
It is readily checked that $W$ divides $\mathcal{I}$, thus by (\ref{eq:benj})
\[
\mathscr{C}(\mathcal{I},V) \simeq \mathscr{C}(\mathcal{I}^W,V)\ast  \mathscr{C}(\overline{\mathcal{I}}_W, V/W)\, .
\]
Since $\overline{\mathcal{I}}_W=\{0\}$ consists only of the trivial subspace of $V/W\cong \F_p$, the coset poset $\mathscr{C}(\overline{\mathcal{I}}_W, V/W)$ is a discrete space with $p$ points, i.e., homeomorphic to $\bigvee^{p-1}S^0$. Fix any basepoint in $\mathscr{C}(\mathcal{I}^W,V)$. Then there are unbased equivalences
\[
\mathscr{C}(\mathcal{I},V)\simeq \mathscr{C}(\mathcal{I}^W,V) \ast \bigvee^{p-1}S^0 \simeq \Sigma \left(\mathscr{C}(\mathcal{I}^W,V) \wedge \bigvee^{p-1}S^0\right)  \cong \bigvee^{p-1}\Sigma\mathscr{C}(\mathcal{I}^W,V) \,,
\]
where we used the homotopy equivalence $X\ast Y\simeq \Sigma (X\wedge Y)$ for any pair of pointed CW-complexes $X,Y$.
\end{proof}

Now we can prove the main result of this subsection.

\begin{proof}[Proof of Theorem \ref{thm:e3u42}]
We choose $W:=\langle e_1,e_2\rangle$ in Lemma \ref{lem:isotr&compl}. Then we only need determine  the homotopy type of $\mathscr{C}(\mathcal{I}^W,V)$. The collection $\mathcal{I}^W$ consists of all the $1$-dimensional subspaces of $V$ with non-zero $e_3$-coordinate as well as all $U\in \mathcal{I}$ with $\dim(U)=2$ and $U\neq W$. The latter are listed in Lemma \ref{lem:isotrsubV}. In particular, $\dim(\mathscr{C}(\mathcal{I}^W,V))\leq 1$.

(1) Let $p=2$. We claim that $\mathscr{C}(\mathcal{I}^W,V)$ is connected. Note that the collection $\mathcal{I}^W$, partially ordered by inclusion, forms a subposet of $\mathscr{C}(\mathcal{I}^W,V)$ which is easily seen to be connected. Therefore, for every $v\in V$ the subposet $v+\mathcal{I}^W$ is connected as well. It now suffices to show that for every $v\in V$ there is a zig-zag in $\mathscr{C}(\mathcal{I}^W,V)$ between $v+\langle e_3\rangle\in v+\mathcal{I}^W$ and $\langle e_1,e_3\rangle\in \mathcal{I}^W$. Suppose that $v=\alpha e_1+\beta e_2+\gamma e_3$ for $\alpha,\beta,\gamma\in\F_2$. Then
\[
v+\langle e_3\rangle\leq \alpha e_1 +\langle e_2,e_3\rangle \geq \alpha e_1 + \langle e_3\rangle\leq \langle e_1,e_3\rangle
\]
is such a zig-zag. It follows that $\mathscr{C}(\mathcal{I}^W,V)$ is connected, hence it has the homotopy type of a wedge of circles. To obtain the number of circles we compute the Euler characteristic. For this recall that in a coset poset $\mathscr{C}(\mathcal{F},G)$ we find exactly $|G:H_0|$ different chains (or $k$-simplices) of the form $gH_0\subset \cdots \subset gH_k$ for any fixed choice of subgroups $H_0 < \cdots < H_k$ in $\mathcal{F}$. Now it is an easy calculation that $\chi(\mathscr{C}(\mathcal{I}^W,V))=-2$, hence $\mathscr{C}(\mathcal{I}^W,V)$ is a bouquet of $3$ circles. 

(2) Let $p$ be odd. Amongst the $p^2$ $1$-dimensional subspaces in $\mathcal{I}^W$ there are $(p-1)^2$ which are neither contained in $\langle e_1,e_3\rangle$ nor in $\langle e_2,e_3\rangle$. When we pass to the coset poset they give rise to $|\F_p^3:\F_p|(p-1)^2=p^2(p-1)^2$ isolated points in $\mathscr{C}(\mathcal{I}^W,V)$. The remaining $(2p-1)$ $1$-dimensional subspaces together with $\langle e_1,e_3\rangle$ and $\langle e_2,e_3\rangle$ form a connected subposet $\mathcal{I}^W_0$ of $\mathscr{C}(\mathcal{I}^W,V)$. In particular, $\mathcal{I}^W_0$ contains $\langle e_3\rangle$, and the same argument as in (1) shows that $\mathcal{I}^W_0$ and $v+\mathcal{I}^W_0$ lie in the same connected component of $\mathscr{C}(\mathcal{I}^W,V)$ for every $v\in V$. Let $\mathscr{C}_0$ denote this connected component. Then
\[
\mathscr{C}(\mathcal{I}^W,V)\simeq \bigvee^{p^2(p-1)^2}S^0 \vee \bigvee^{1-\chi(\mathscr{C}_0)} S^1\, .
\]
The proof is finished by calculating $1-\chi(\mathscr{C}_0)=(p-1)^2$ and applying Lemma \ref{lem:isotr&compl}.
\end{proof}

\subsection{The Burnside groups $B(r,3)$} \label{sec:burnside}

Recall that the Burnside group of exponent $3$ and rank $r$ is defined by
\[
B(r,3):=F_r/(F_r)^3\,,
\]
where $F_r$ is the free group on $r$ generators. It is well known that $B(r,3)$ is a finite $2$-Engel group of class $3$ if $r\geq 3$ \cite[14.2.3]{Ro96}. Let us denote the Frattini quotient by
\[
V:=B(r,3)/\Phi(B(r,3))\cong (\F_3)^r\,,
\]
and the quotient map by $\pi\co B(r,3)\to V$.

\begin{lemma}\label{lemm:isotrB3r}
Let $r\geq 3$ and let $\mathcal{I}$ be the collection of subspaces $U\leq V$ for which $\cl(\pi^{-1}(U))\leq 2$. Then $U\in \mathcal{I}$ if and only if $\dim(U)\leq 2$.
\end{lemma}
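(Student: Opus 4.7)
The plan is to extract a multilinear form from the triple commutator and use it to detect nilpotency class. Because $B(r,3)$ is nilpotent of class $\leq 3$, the triple commutator $[-,-,-]\co B(r,3)^3\to\Gamma^3(B(r,3))$ is trilinear with central target, and since $[\Gamma^2(B(r,3)),\Gamma^2(B(r,3))]\leq \Gamma^4(B(r,3))=1$ it vanishes whenever an argument lies in $\Phi(B(r,3))=[B(r,3),B(r,3)]$. Hence it descends to a trilinear form $\tilde\beta\co V\otimes V\otimes V \to \Gamma^3(B(r,3))$. Since $r\geq 3$, $B(r,3)$ is a $2$-Engel group, and combining the identity $[x,y,y]=1$ with the cyclic identity $[a,b,c]=[c,a,b]$ (cf.\ the proof of Lemma~\ref{lem:cocyclew2}) will show that $\tilde\beta$ vanishes on any tuple with a repeated entry. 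Thus $\tilde\beta$ is alternating and factors through $\wedge^3 V$.

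Once $\tilde\beta$ is in hand, the ``if'' direction is immediate: if $\dim(U)\leq 2$, then for any $x,y,z\in\pi^{-1}(U)$ the images $\pi(x),\pi(y),\pi(z)$ all lie in $U$ and are therefore linearly dependent, so $[x,y,z]=\tilde\beta(\pi(x),\pi(y),\pi(z))=0$. Hence $\cl(\pi^{-1}(U))\leq 2$ and $U\in\mathcal{I}$.

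For the converse, suppose $\dim(U)\geq 3$ and choose linearly independent $v_1,v_2,v_3\in U$ with lifts $g_1,g_2,g_3\in\pi^{-1}(U)$. By the Burnside basis theorem we can extend $g_1,g_2,g_3$ to a generating set $g_1,\dots,g_r$ of $B(r,3)$. The universal property of $B(r,3)$ as the free rank-$r$ group of exponent $3$ then yields a surjection $B(r,3)\to B(3,3)$ sending $g_1,g_2,g_3$ to the free generators of $B(3,3)$ and $g_4,\dots,g_r$ to $1$, under which $[g_1,g_2,g_3]$ maps to the corresponding triple commutator in $B(3,3)$. Since $\cl(B(3,3))=3$, $\Gamma^3(B(3,3))$ is nontrivial, and by the alternating structure of the analogous $\tilde\beta$ on the $3$-dimensional Frattini quotient of $B(3,3)$ it is generated by $[g_1,g_2,g_3]$. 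Hence $[g_1,g_2,g_3]\neq 1$ in $B(3,3)$, so also in $B(r,3)$, forcing $\cl(\pi^{-1}(U))=3$ and $U\notin\mathcal{I}$.

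The main obstacle will be verifying the nonvanishing of $[g_1,g_2,g_3]$ in $B(3,3)$; however this reduces to the standard fact (already cited in the introduction) that $B(3,3)$ has nilpotency class exactly $3$, after which the alternating/cyclic structure of $\tilde\beta_{B(3,3)}$ forces $\Gamma^3(B(3,3))$ to be cyclic and generated by $[g_1,g_2,g_3]$, and the remainder is formal.
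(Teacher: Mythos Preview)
Your argument is correct and follows essentially the same route as the paper: both directions rest on the $2$-Engel property and on the relative freeness of $B(r,3)$ together with $\cl(B(3,3))=3$. The paper is terser---it cites the $2$-Engel property for the ``if'' direction without unpacking the alternating trilinear form you make explicit, and for ``only if'' it uses an automorphism of $B(r,3)$ to identify $\langle g_1,g_2,g_3\rangle$ directly with $B(3,3)$ rather than your surjection onto $B(3,3)$---but these are minor variations on the same idea.
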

\begin{proof}
Any subspace of $V$ of dimension $\leq 2$ is in $\mathcal{I}$, because $B(r,3)$ is a $2$-Engel group. We claim that any $3$-dimensional subspace  $U\leq V$ lifts to a subgroup of $B(r,3)$ isomorphic to $B(3,3)$ whence $\cl(\pi^{-1}(U))=3$. Let $\{x_1,\dots,x_r\}\subset F_r$ be a generating set and $\{\bar{x}_1,\dots,\bar{x}_r\}$ its image in $B(r,3)$. If $\{x_1',\dots,x_r'\}$ is any other generating set of $B(r,3)$, there is an automorphism of $B(r,3)$ taking $x'_i\mapsto \bar{x}_i$ for all $i=1,\dots,r$. Now given a basis $u_1,u_2,u_3$ of $U$, extend it to a basis $u_1,\dots,u_r$ of $V$ and choose lifts $\tilde{u}_1,\dots,\tilde{u}_r$ in $B(r,3)$. Then $\tilde{u}_1,\dots,\tilde{u}_r$ is a generating set for $B(r,3)$ which can be rearranged by an automorphism so that $\tilde{u}_i \mapsto \bar{x}_i$ for all $i$, and clearly $\langle \bar{x}_1,\bar{x}_2,\bar{x}_3\rangle$ is a subgroup of $B(r,3)$ isomorphic to $B(3,3)$.
\end{proof}

\begin{theorem}\label{thm:htB3r}
For each $r\geq 3$ there is a homotopy equivalence
\[
\mathscr{C}(\mathcal{N}_3,B(r,3))\simeq \bigvee^{m(r)} S^2\,,
\]
where $m(r)=3^{r-3}(9^r-13\cdot 3^r+39)-1$.
\end{theorem}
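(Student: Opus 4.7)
My plan is to apply Lemma \ref{lem:quotientcosetposet} with $N:=[B(r,3),B(r,3)]$ to pass from $\mathscr{C}(\mathcal{N}_3,B(r,3))$ to the much simpler coset poset $\mathscr{C}(\mathcal{I},V)$ in the abelianization $V\cong(\F_3)^r$, identify $\mathcal{I}$ via Lemma \ref{lemm:isotrB3r}, and then use low dimension together with simple connectivity to reduce the whole homotopy type to an Euler characteristic computation.

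First I would verify the hypothesis of Lemma \ref{lem:quotientcosetposet}, namely that $\cl(NH)\leq 2$ whenever $H\leq B(r,3)$ satisfies $\cl(H)\leq 2$. Because $\cl(B(r,3))=3$, $\Gamma^3(B(r,3))$ is central, and $[N,N]\leq \Gamma^4(B(r,3))=1$. Expanding $[nh,n'h']$ with the standard identities $[ab,c]=[a,c]^b[b,c]$ and $[a,bc]=[a,c][a,b]^c$ and using the centrality of $\Gamma^3(B(r,3))$ shows $[NH,NH]\leq [H,H]\cdot \Gamma^3(B(r,3))$; a second bracket with $NH$ kills the central factor and places the remaining $[H,H]$ into $\Gamma^3(H)=1$. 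Combined with Lemma \ref{lemm:isotrB3r}, this yields a homotopy equivalence
\[
\mathscr{C}(\mathcal{N}_3,B(r,3))\simeq \mathscr{C}(\mathcal{I},V),
\]
where $\mathcal{I}$ is the family of all subspaces of $V$ of dimension at most $2$.

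Next I would observe that $\mathscr{C}(\mathcal{I},V)$ has dimension at most $2$ (every chain in $\mathcal{I}$ has length at most three) and is simply connected by Theorem \ref{thm:main3groupsIntro}(i) applied to $B(r,3)$, which is a $2$-Engel group with $\exp(\Gamma^3)\mid 3$. Any simply-connected $2$-dimensional CW complex is homotopy equivalent to a wedge of copies of $S^2$, with the number of summands equal to $\chi(\mathscr{C}(\mathcal{I},V))-1$.

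The remaining step is the Euler characteristic. For each chain of subspaces $U_0\subsetneq\cdots\subsetneq U_k$ in $\mathcal{I}$ there are $|V|/|U_0|=3^{r-\dim U_0}$ coset chains, and the number of subspace chains of prescribed dimensions $d_0<\cdots<d_k$ is $\binom{r}{d_0}_3\binom{r-d_0}{d_1-d_0}_3\cdots\binom{r-d_{k-1}}{d_k-d_{k-1}}_3$. Splitting by dimension type, substituting $\binom{r}{1}_3=(3^r-1)/2$, $\binom{r}{2}_3=(3^r-1)(3^{r-1}-1)/16$, and $\binom{2}{1}_3=4$, and collapsing the alternating sum, I expect to obtain
\[
\chi \;=\; 3^{3r-3}-13\cdot 3^{2r-3}+13\cdot 3^{r-2} \;=\; 3^{r-3}(9^r-13\cdot 3^r+39),
\]
so $\chi-1=m(r)$. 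The main obstacle is the arithmetic bookkeeping in this final simplification; the homotopy-theoretic input is essentially forced once Lemma \ref{lem:quotientcosetposet}, Lemma \ref{lemm:isotrB3r} and Theorem \ref{thm:main3groupsIntro}(i) are in hand.
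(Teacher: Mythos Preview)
Your proposal is correct and follows essentially the same route as the paper: reduce via Lemma \ref{lem:quotientcosetposet} with $N=[B(r,3),B(r,3)]$ to $\mathscr{C}(\mathcal{I},V)$, invoke Lemma \ref{lemm:isotrB3r} to see the complex is $2$-dimensional, use the $2$-Engel property for simple connectivity, and finish with an Euler characteristic count. Your explicit verification of the hypothesis of Lemma \ref{lem:quotientcosetposet} and your citation of Theorem \ref{thm:main3groupsIntro}(i) (the paper leaves both implicit, relying on $\cl(G)=3$ and the remark that $B(r,3)$ is $2$-Engel) are welcome clarifications but do not change the argument.
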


\begin{proof}
By Lemma \ref{lem:quotientcosetposet} applied with $N=\Phi(B(r,3))=[B(r,3),B(r,3)]$ we have a homotopy equivalence
\[
\mathscr{C}(\mathcal{N}_3,B(r,3))\xrightarrow{\simeq} \mathscr{C}(\mathcal{I},V)\,.
\]
By Lemma \ref{lemm:isotrB3r}, $\mathscr{C}(\mathcal{I},V)$ is a $2$-dimensional simply-connected complex, thus it is homotopy equivalent to a wedge of $2$-spheres. The number of spheres is determined by the Euler characteristic: $m(r)=-1+n_0-n_1+n_2$, where the number $n_k$ of $k$-simplices of $\mathscr{C}(\mathcal{I},V)$ is readily computed as in part (1) of the proof of Theorem \ref{thm:e3u42}:
\begin{alignat*}{1}
n_0 & =3^r+3^{r-1}\binom{r}{1}_3+3^{r-2}\binom{r}{2}_3 \\
n_1 & = 3^{r}\binom{r}{1}_3+3^{r}\binom{r}{2}_3+3^{r-1}\binom{2}{1}_3 \binom{r}{2}_3 \\
n_2 & = 3^{r}\binom{2}{1}_3\binom{r}{2}_3\,,
\end{alignat*}
where $\binom{r}{k}_3$ is the number of $k$-dimensional subspaces of $(\F_3)^r$. A short computation yields the claimed formula for $m(r)$.
\end{proof}

\section*{Acknowledgements}


BV acknowledges support from the European Research Council (ERC) under the European Union's Horizon 2020 research and innovation programme (grant agreement No. 682922), as well as support from Universidad Nacional Aut\'onoma de M\'exico (UNAM) under the programme ``Becas de Posdoc DGAPA."

This project was also supported by the Danish National Research Foundation through the Copenhagen Centre for Geometry and Topology (GEOTOP-DNRF151).

\bibliographystyle{plain}

\end{document}